\documentclass[11pt,x11names,dvipsnames,svgnames]{amsart}
\usepackage{amsfonts,amssymb,amscd,amstext,mathrsfs,mathtools}
\usepackage{hyperref}
\usepackage{verbatim}
\usepackage[T1]{fontenc} 

\usepackage{tikz}
\usetikzlibrary{calc, arrows.meta, decorations.markings, shapes.geometric}

\usepackage{graphicx}
\usepackage[up,bf]{caption} 
\usepackage{subcaption}

\usepackage{times}
\usepackage{enumerate}
\input xy
\xyoption{all}

\numberwithin{equation}{section}
\numberwithin{figure}{section}

\newtheorem{theorem}{Theorem}[section] 
\newtheorem{proposition}[theorem]{Proposition} 
\newtheorem{claim}[theorem]{Claim} 
\newtheorem{lemma}[theorem]{Lemma} 
\newtheorem{corollary}[theorem]{Corollary} 

\theoremstyle{definition} 
\newtheorem{definition}[theorem]{Definition} 
\newtheorem{remark}[theorem]{Remark}

\newcommand\Acal{\mathcal{A}} 
 
\newcommand\Ccal{\mathcal{C}} 
\newcommand\Dcal{\mathcal{D}} 
 
\newcommand\Fcal{\mathcal{F}}

\newcommand\Pcal{\mathcal{P}}

\newcommand\Cscr{\mathscr{C}} 
\newcommand\Dscr{\mathscr{D}}

\newcommand\Jscr{\mathscr{J}}

\newcommand\B{\mathbb{B}} 
\newcommand\C{\mathbb{C}} 
\newcommand\D{\overline{\mathbb D}} 
\newcommand\CP{\mathbb{CP}} 
\renewcommand\D{\mathbb D}

\newcommand\N{\mathbb{N}} 
 
\newcommand\R{\mathbb{R}}

\newcommand\Z{\mathbb{Z}}

\newcommand\igot{\mathfrak{i}}

\renewcommand\igot{\mathfrak{i}}

\renewcommand\imath{\igot}

\newcommand\hra{\hookrightarrow}

\newcommand\wt{\widetilde} 
 
\newcommand\di{\partial} 
\newcommand\dibar{\overline\partial}

\newcommand\Id{\mathrm{Id}}

\def\Ell1{\mathrm{Ell_1}} 
\def\CEll1{\mathrm{CEll_1}}

\def\Jst{J_{\mathrm{st}}} 

\newcommand\Flux{\mathrm{Flux}}

\newcommand\Co{\mathrm{Co}}

\newcommand\Hom{\mathrm{Hom}}

\newcommand\boldA{\mathbf A}

\makeatletter
\@namedef{subjclassname@2020}{\textup{2020} Mathematics Subject Classification}
\makeatother

\begin{document}

\title{Families of proper minimal surfaces}  

\author{Antonio Alarc\'on and Franc Forstneri{\v c}}

\address{Antonio Alarc\'on, Departamento de Geometr\'{\i}a y Topolog\'{\i}a e Instituto de Matem\'aticas (IMAG), Universidad de Granada, Campus de Fuentenueva s/n, E--18071 Granada, Spain.}

\email{alarcon@ugr.es}

\address{Franc Forstneri{\v c}, Faculty of Mathematics and Physics, University of Ljubljana, Jadranska 19, 1000 Ljubljana, Slovenia}

\address{Franc Forstneri{\v c}, Institute of Mathematics, Physics, and Mechanics, Jadranska 19, 1000 Ljubljana, Slovenia}

\email{franc.forstneric@fmf.uni-lj.si}

\subjclass[2020]{Primary 53A10; secondary 32H02, 32E30, 53C42}


\date{19 May 2026}

\keywords{Riemann surface, minimal surface, holomorphic null immersion} 

\begin{abstract}
Assume that $X$ is a connected, open, oriented smooth surface,
$B$ is a compact Euclidean neighbourhood retract, 
and $\Jscr=\{J_b\}_{b\in B}$ is a continuous 
family of complex structures on $X$ of local H\"older class 
$\Cscr^\alpha$ for some $0<\alpha<1$.
We construct a continuous family of $J_b$-conformal minimal 
immersions $u_b:X\to \R^3$, $b\in B$, properly projecting to $\R^2$
and having an arbitrary given family 
of flux homomorphisms $\Flux_{u_b}:H_1(X,\Z)\to\R^3$.
In particular, there are continuous families of proper $J_b$-holomorphic 
null immersions $X\to \C^3$ and of proper $J_b$-holomorphic 
immersions $X\to\C^2$, $b\in B$. 
\end{abstract}

\maketitle

%
%
\section{Introduction}\label{sec:intro}

In this paper, we establish the existence of families of proper minimal surfaces in Euclidean space $\R^3$ with varying but prescribed complex structures, depending continuously on a parameter in a compact Euclidean neighbourhood retract. 

The global theory of minimal surfaces in $\R^3$ has been a major focus of interest since the fundamental developments by Osserman in the 1960s \cite{Osserman1986}, with emphasis on the conformal properties of such surfaces under global assumptions such as completeness or properness; see e.g.\ \cite{MeeksPerez2004SDG}. Until not too long ago, it was widely believed that properness of a minimal surface in $\R^3$ imposes strong restrictions on its underlying complex structure. In this direction, it was conjectured that a proper minimal surface in $\R^3$ having either finite topology (Sullivan) or a proper projection into a plane (Schoen-Yau) must have parabolic conformal type in the sense of Ahlfors-Nevanlinna \cite{Nevanlinna1941,Ahlfors1947} (that is, it does not carry any nonconstant negative subharmonic function \cite[Section IV.6]{AhlforsSario1960}); see \cite[p.\ 18]{SchoenYau1997}  and \cite[Section 3.10]{AlarconForstnericLopez2021}. 
A counterexample to Sullivan's conjecture was 
given in 2003 by Morales \cite{Morales2003GAFA}, who constructed a proper minimal surface with the conformal type of the disc. Later, in 2012, Alarc\'on and L\'opez proved that every open Riemann surface is the complex structure of a minimal surface in $\R^3$ properly projecting into a plane \cite{AlarconLopez2012JDG}, thereby settling both conjectures in the most general possible way. 
Nevertheless, almost all minimal surfaces in $\R^3$ (in the natural topological sense of Baire category) are nonproper \cite[Corollary 1.5]{AlarconLopez2026PE}, so it is fair to say that proper ones 
are hard to find.

In this paper, $X$ is a connected, open, 
orientable smooth surface with a countable base of topology, $B$ is a compact subset 
of a Euclidean space $\R^m$ which admits a retraction from an open neighbourhood 
$U\subset \R^m$ (every finite CW complex is such; see 
\cite[Definition 1.5]{Forstneric2024Runge} and the subsequent discussion),  
and $\Jscr=\{J_b\}_{b\in B}$ is a continuous family of complex structures 
on $X$ of local H\"older class $\Cscr^\alpha$ for some 
$0<\alpha<1$. See Section \ref{sec:prelim} for the details. 
Here is our main result; see also Remark \ref{rem:hp}. 
This gives an affirmative answer to 
\cite[Problem 8.7 (c)]{Forstneric2024Runge}.

%
%
\begin{theorem}\label{th:CMI}
Assume that $X$, $B$, and $\Jscr$ are as above.
Then there exists a continuous map $u=(u_1,u_2,u_3):B\times X\to \R^3$ 
such that the map $u_b=u(b,\cdotp)=(u_{b,1},u_{b,2},u_{b,3}):X\to\R^3$
is a proper $J_b$-conformal minimal immersion for every $b\in B$.
Furthermore, $u$ can be chosen such that every map $(u_{b,1},u_{b,2}):X\to\R^2$ is 
proper and the flux $\Flux_{u_b}$ of $u_b$ equals any given continuous family 
of homomorphisms $\Fcal_b:H_1(X,\Z)\to\R^3$ for $b\in B$.
\end{theorem}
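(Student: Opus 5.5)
The plan is to adapt the Alarc\'on--L\'opez construction of minimal surfaces properly projecting to a plane to the parametric setting. The parametric Oka--Runge machinery of \cite{Forstneric2024Runge} for families of complex structures takes the place of the fixed Riemann surface.

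The first step is a uniformization reduction. Choose a normal exhaustion of $X$ by smoothly bounded compact domains $K_1\subset K_2\subset\cdots$, each a deformation retract of the next or differing from it by a single handle (a non-critical or critical step), with $H_1(K_j,\Z)\to H_1(X,\Z)$ eventually surjective. Since $B$ is compact and $\Jscr$ is continuous of class $\Cscr^\alpha$, the parametric Beltrami equation gives the following: near each $K_j$ there are continuous families of $J_b$-holomorphic charts. Moreover, $J_b$-holomorphic null curves, equivalently $1$-forms $\Phi_b=\partial u_b$ with values in the null quadric $\boldA=\{z_1^2+z_2^2+z_3^2=0\}\setminus\{0\}$, can be approximated uniformly in $b$ on $K_j$ by ones defined on $X$. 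This is the parametric Runge/Mergelyan theorem of \cite{Forstneric2024Runge}, applied to the Oka manifold $\boldA$ together with the period-dominating spray technique. The flux condition is encoded as $\Re\int_\gamma \Phi_b=0$ and $\Im\int_\gamma\Phi_b=\Fcal_b(\gamma)$ for a homology basis $\gamma$ of $K_j$. Period-dominating sprays parametrized continuously by $b$ let us correct periods exactly, because $B$ is a compact ENR. Hence we may retract a neighbourhood onto $B$ and work with finitely many spray parameters.

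The inductive step is the heart of the proof. Given a continuous family $u_b^j$ on $K_j$ with $|(u^j_{b,1},u^j_{b,2})|>j$ on $bK_j$ and the correct flux, we produce $u^{j+1}_b$ on $K_{j+1}$. It should be close to $u^j_b$ on $K_j$, satisfy $|(u_{b,1},u_{b,2})|>j$ on $K_{j+1}\setminus \mathring K_j$, satisfy $|(u_{b,1},u_{b,2})|>j+1$ on $bK_{j+1}$, and have the correct flux. In the critical case we attach an arc to $K_j$ and extend $u_b$ along it as a continuous family of generalized conformal minimal immersions. The arc is chosen so that the new period equals $\Fcal_b$ on the new cycle and the projection stays outside the disc of radius $j$. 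After that, parametric Mergelyan approximation reduces matters to the non-critical case. In the non-critical case we use the Alarc\'on--L\'opez lemma: split $bK_j$ into finitely many arcs on which the planar projection $(u_1,u_2)$ lies in a half-plane beyond radius $j$. On each corresponding piece of the annulus $K_{j+1}\setminus\mathring K_j$ we push one coordinate using Runge approximation of the spinorial data (Weierstrass data $g,\phi_3$), keeping it large while the remaining coordinate is controlled. Two successive such deformations (Alarc\'on--L\'opez's ``$L$-labyrinth'' style argument) yield the required growth on $bK_{j+1}$.

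The main obstacle is doing the non-critical step continuously in $b$. The combinatorial choices, namely the partition of $bK_j$ into arcs and the choice of direction for each arc, depend on $b$. I would handle this by compactness and a partition of unity on $B$. For each $b_0$, the choices valid at $b_0$ remain valid on a neighbourhood $U_{b_0}$. Cover $B$ by finitely many such neighbourhoods, and perform the deformations sequentially, each one supported in $b$ by a cutoff function $\chi_i(b)$ multiplying the parameter of a holomorphic spray. This works because the deformations are given by flows of holomorphic vector fields tangent to $\boldA$ (e.g.\ shears $z\mapsto z+t f(z,\cdot)v$ with $v$ null), so that interpolating the time parameter by $\chi_i(b)$ stays within null data. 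Each subsequent deformation is made small on the set where previous ones achieved growth. At each stage the periods are re-corrected by the parametric period-dominating spray. Passing to the limit $u_b=\lim_j u^j_b$ with fast enough approximation gives continuous families of immersions; they are injective on derivatives by the usual small-perturbation argument, have the prescribed flux, and $(u_{b,1},u_{b,2})$ is proper uniformly in $b$. The final statements follow by taking $u_b$ as the real part of a null curve (zero flux case) and projecting to $\C^2$.
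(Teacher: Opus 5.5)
There is a genuine gap, and it is in the step you yourself flag as the main obstacle: the gluing over $B$ by cutoffs.

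The deformations that produce growth are not flows of holomorphic vector fields tangent to $\boldA$ that you can run for time $\chi_i(b)$; sprays of that kind only serve to restore periods. Each growth-producing deformation is a Mergelyan approximation, on a smaller Runge set, of data that differ drastically from the old ones off that set. In the paper, this is a $\C^*$-valued multiplier close to $1$ except along arcs $\xi_a$, where it makes an integral huge, so that $u_{b,2}$ is shifted by a large amount on the discs $\Omega_a$. The preceding Mergelyan step also redefines the map on the whole annulus.

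Suppose you build in a homotopy parameter anyway, say by working over $B\times[0,1]$. The growth conditions on $K_{j+1}\setminus\mathring K_j$ and $bK_{j+1}$ are still not stable under partially applied deformations. For $b$ with $0<\chi_k(b)<1$, the $k$-th deformation moves its discs only part of the way, in a direction depending on $b$ and on the $k$-th division. It can therefore bring points that an earlier deformation had pushed out of the disc of radius $j$ back into it. So in the transition zones neither the old growth nor the new growth is guaranteed. Requiring the $k$-th deformation to be small where growth was already achieved only relocates the transition zone; it does not remove it.

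Incidentally, with your Euclidean criterion the arcs of $bK_j$ can always be chosen independent of $b$, by uniform continuity. What genuinely varies with $b$ is the good direction on each arc, and the zeros of the relevant component of $\di_{J_b}u_b$, which may sweep out open sets when $\dim B\ge 2$.

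There is a second missing ingredient. Pushing one coordinate while keeping another fixed, by Runge approximation of $(g,\phi_3)$, is the Runge theorem with a fixed component function. That theorem is not available for families.

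The paper needs no gluing over $B$. It uses the one-sided quantity $\max\{u_{b,1},u_{b,2}\}$ and carries through the induction a single $b$-independent division $\Dscr^j$ of $bX_j$, compatible with $(u^j,j)$ in the sense of ($\mathfrak D$1)--($\mathfrak D$4). On each arc the same coordinate exceeds $j$ for all $b$, and $\di_{J_b}u_{b,1}$, $\di_{J_b}u_{b,2}$ vanish at no division point for any $b$.

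Such a division need not exist for a general family, so it is rebuilt at every step. It is built from the arcs $\beta_a^{a-1}$, $\beta_a\cap\Omega_a$, $\beta_a^a$ via (ii$_5$), (ii$_6$), (ii$_9$). In the critical case the division is first refined so that the new arc $\gamma$ joins two arcs on which $u_{b,2}>j-1$ for all $b$. Because the directions are fixed to $e_1,e_2$, all choices ($\gamma_a$, $\Omega_a$, $\Delta_a$, $\xi_a$) are made uniformly in $b$ by compactness, and the pushes go in the same direction for every $b$.

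The fixed-component Runge theorem is replaced by the L\'opez--Ros deformation $\psi_b\mapsto h_b\psi_b$ with $\hat f_{b,1}$ kept fixed. The function $h_b$ is built in three steps:
\begin{enumerate}
\item a multiplier is obtained by convex integration on the quadrics $x^2+y^2=-\hat f_{b,1}^2$ along $\xi_a$; this is where $\hat f_{b,1}\neq 0$ near the division points, i.e.\ ($\mathfrak D$4), is used;
\item it is extended by the parametric Oka principle for $\C^*$;
\item it is corrected by a L\'opez--Ros period-dominating spray.
\end{enumerate}
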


Let us clarify the technical notions in the statement of the theorem. Assume that $J$ is a complex structure on $X$, so $(X,J)$ is an 
open Riemann surface. 
Let $\Jst$ denote the standard complex structure on $\C$, given by
multiplication by $\imath=\sqrt{-1}$. 
The exterior differential $d$ on $X$ then splits 
in the sum $d=\di +\dibar$ of the $(J,\Jst)$-linear part 
$\di=\di_J$ and the $(J,\Jst)$-antilinear 
part $\dibar=\dibar_J$, 
with $J$-holomorphic functions being the kernel of $\dibar$. 
The operator 
$d^c = d^c_J= \imath\, (\dibar-\di) = 2\, \Im (\di)$ 
is the conjugate differential, 
and $dd^c = 2\imath\, \di\dibar = -2\imath \dibar \di$ 
is the Laplace operator whose kernel are the $J$-harmonic functions.
A smooth immersion $u=(u_1,u_2,\ldots,u_n):X\to \R^n$, $n\ge 2$, 
is $J$-conformal if and only if the Riemannian metric on $X$ obtained by pulling back the Euclidean metric on $\R^n$ via the immersion $u$, 
together with the chosen orientation on $X$, 
induces the given complex structure $J$; 
equivalently, if $\di u=(\di u_1,\di u_2,\ldots,\di u_n)$ 
satisfies the nullity condition
\[ 
	(\di u_1)^2+(\di u_2)^2+\cdots+(\di u_n)^2=0.
\] 
Assume now that $n\ge 3$. A $J$-conformal immersion 
$u:X\to \R^n$ is minimal, in the sense that it  
parameterises a minimal surface in $\R^n$, if and only if it is 
$J$-harmonic; equivalently, $\di u$ is a $J$-holomorphic 1-form. 
The {\em flux} of such a map $u$ is the homomorphism 
$\Flux_u:H_1(X,\Z)\to\R^n$ defined on any closed curve 
$C\subset X$ by $\Flux_u(C)=\oint_C d^c u$, 
where $d^c u$ is the conjugate differential 
(see \cite[Definition 2.3.2]{AlarconForstnericLopez2021}). 
Since $dd^c u=0$, $\oint_C d^c u$ only depends on the 
homology class $[C]\in H_1(X,\Z)$. 
(Recall that $H_1(X,\Z)\cong\Z^\ell$ for some $\ell\in \{0,1,\ldots,\infty\}$.)
In particular, $\Flux_u=0$ if and only if $u$ has a 
$J$-harmonic  conjugate $v:X\to\R^n$, and in this case, 
$F=u+\imath v:X\to\C^n$ is a $J$-holomorphic immersion whose 
differential $dF=(dF_1,dF_2,\ldots,dF_n)$ satisfies 
$(dF_1)^2 + (dF_2)^2 + \cdots + (dF_n)^2 =0$.
Such $F$ is called a $J$-holomorphic null curve in $\C^n$.

Let $\Jscr=\{J_b\}_{b\in B}$ be a family of complex structures on $X$
as above. A continuous map $F:B\times X\to \C^n$ is said to be 
{\em $\Jscr$-holomorphic} if the map 
$F_b=F(b,\cdotp):X\to\C^n$ is $J_b$-holomorphic for every $b\in B$. 
Taking $\Fcal_b=0$ for all $b\in B$ in Theorem \ref{th:CMI}
gives the following corollary, 
which provides an affirmative answer to 
\cite[Problem 8.7 (b)]{Forstneric2024Runge} for families
of holomorphic null immersions.

%
%
\begin{corollary}\label{cor:null}
If $X$, $B$, and $\Jscr$ are as in Theorem \ref{th:CMI},
then there exists a $\Jscr$-holomorphic map 
$F=(F_1,F_2,F_3):B\times X\to \C^3$
such that for every $b\in B$, the map 
$F_b=F(b,\cdotp)=(F_{b,1},F_{b,2},F_{b,3}):X\to\C^3$
is a $J_b$-holomorphic null immersion and 
the map $(\Re F_{b,1},\Re F_{b,2}):X\to\R^2$ is proper.
In particular, the map $F_b:X\to\C^3$ is proper for every $b\in B$. 
\end{corollary}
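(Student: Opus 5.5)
The corollary follows from Theorem \ref{th:CMI} by taking the zero flux family $\Fcal_b=0$ for all $b\in B$. This is clearly a continuous family of homomorphisms $H_1(X,\Z)\to\R^3$. The theorem then gives a continuous map $u:B\times X\to\R^3$ with three properties for every $b\in B$: the map $u_b$ is a $J_b$-conformal minimal immersion, the map $(u_{b,1},u_{b,2})$ is proper, and $\Flux_{u_b}=0$.

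The next step is to produce harmonic conjugates that depend continuously on $b$. Fix a base point $x_0\in X$ and set
\[
v_b(x)=\int_{x_0}^x d^c_{J_b}u_b ,\qquad x\in X,
\]
where we integrate along any path from $x_0$ to $x$. Because $dd^cu_b=0$ and the flux vanishes, this integral is independent of the path, so $v_b$ is well defined. It is $J_b$-harmonic and satisfies $dv_b=d^c u_b$. Hence
\[
F_b=u_b+\imath v_b
\]
is $J_b$-holomorphic with $\di F_b=2\di u_b$. Therefore $F_b$ is a $J_b$-holomorphic null immersion, as explained in the discussion after Theorem \ref{th:CMI}.

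The only point needing care is the joint continuity of $F$ in $(b,x)$. The form $d^c_{J_b}u_b$ must depend continuously on $b$, locally uniformly on $X$. Then integrals along a fixed path vary continuously, and we can handle $x$ near a given point by integrating along a fixed path followed by short segments.

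I expect this continuity to be the main technical obstacle. I would get it from the construction in the theorem's proof, where $u_b$ is obtained as a locally uniform limit of families whose derivatives also converge, so that $u$ is continuous in the $\Cscr^1$ sense in $x$. Failing that, I would use interior elliptic estimates for $J_b$-harmonic maps. Since $\{J_b\}$ is a continuous family of $\Cscr^\alpha$ structures, uniform convergence of $J_b$-harmonic functions implies local $\Cscr^{1}$ convergence of their differentials, uniformly in $b$. This makes $b\mapsto d^c_{J_b}u_b$ continuous locally uniformly on $X$, which is exactly what is needed.

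Finally, $\Re F_{b,j}=u_{b,j}$, so the map $(\Re F_{b,1},\Re F_{b,2})=(u_{b,1},u_{b,2})$ is proper. Since $|F_b|\ge|(\Re F_{b,1},\Re F_{b,2})|$, preimages of compact sets under $F_b$ are closed subsets of compact sets, so $F_b$ is proper as well.
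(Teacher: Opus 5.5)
Your proof is correct and follows the paper's approach: the paper obtains the corollary simply by taking $\Fcal_b=0$ in Theorem \ref{th:CMI} and passing to the $J_b$-harmonic conjugate. The paper leaves implicit the joint continuity of the conjugate $v_b$ in $b$, which you check. One small correction there: for $\Cscr^\alpha$ structures the $J_b$-harmonic functions are only $\Cscr^{(1,\alpha)}$. So the elliptic estimates (via continuous families of local $J_b$-holomorphic charts) give locally uniform $\Cscr^0$ convergence of the differentials, not $\Cscr^1$, and that is exactly what your path-integration argument needs.
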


For $B$ a singleton, Theorem \ref{th:CMI} and Corollary \ref{cor:null}
were proved by Alarc\'on and L\'opez 
\cite{AlarconLopez2012JDG} in a more precise form 
with approximation on compact Runge sets in $X$. 
The higher dimensional case of minimal surfaces in $\R^n$ 
and null curves in $\C^n$ for $n>3$ was treated in 
\cite{AlarconForstneric2014IM,AlarconForstnericLopez2016MZ}
and \cite[Sec.\ 3.10]{AlarconForstnericLopez2021}. 
The h-principle for families of such maps 
(not necessarily proper) from a single open Riemann surface 
was obtained in \cite{ForstnericLarusson2019CAG}, 
while the parametric h-principle for the inclusion of the subset of 
complete immersions was established in \cite{AlarconLarusson2025JGA}.
Families of not necessarily proper or complete minimal surfaces and  
null curves from a variable family of open Riemann surfaces 
were first constructed by the second named author in 
\cite[Sec.\ 8]{Forstneric2024Runge}. The nontrivial addition 
in Theorem \ref{th:CMI} and Corollary \ref{cor:null} 
is to ensure properness of maps in such families. 

Taking the first two components of a map in Corollary 
\ref{cor:null} gives the following corollary,
which provides an affirmative answer to 
\cite[Problem 8.7 (a)]{Forstneric2024Runge}.

\begin{corollary}\label{cor:C2}
If $X$, $B$, and $\Jscr$ are as in Theorem \ref{th:CMI},
then there exists a $\Jscr$-holomorphic map 
$F=(F_1,F_2):B\times X\to \C^2$
such that for every $b\in B$, the $J_b$-holomorphic map
$F_b:X\to\C^2$ is an immersion and the $J_b$-harmonic map 
$\Re F_b=(\Re F_{b,1},\Re F_{b,2}):X\to\R^2$ is proper. 
Hence, the immersion $F_b$ is proper for every $b\in B$. 
\end{corollary}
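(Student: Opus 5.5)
The plan is to deduce Corollary \ref{cor:C2} directly from Corollary \ref{cor:null}. Take the $\Jscr$-holomorphic map $\tilde F=(F_1,F_2,F_3):B\times X\to\C^3$ given there, and set $F=(F_1,F_2):B\times X\to\C^2$. Then $F$ is continuous, and each $F_b$ is $J_b$-holomorphic because its components are. The map $\Re F_b=(\Re F_{b,1},\Re F_{b,2})$ is $J_b$-harmonic, and by Corollary \ref{cor:null} it is proper. Properness of $F_b$ follows from the estimate $|F_b(x)|\ge|\Re F_b(x)|$: if $K\subset\C^2$ is compact, then $F_b^{-1}(K)$ is a closed subset of $(\Re F_b)^{-1}(\Re K)$, which is compact.

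The only point needing an argument is that $F_b$ is an immersion, and for this I use the nullity condition. At a point $x\in X$, choose a local $J_b$-holomorphic coordinate $z$ and write $dF_{b,j}=f_j\,dz$. Since $\tilde F_b$ is an immersion, the triple $(f_1,f_2,f_3)$ does not vanish at $x$. Since $\tilde F_b$ is null, $f_1^2+f_2^2+f_3^2=0$. Suppose $f_1(x)=f_2(x)=0$. Then $f_3(x)^2=0$, so $f_3(x)=0$, which is a contradiction. Hence $(f_1,f_2)(x)\neq0$, and $F_b$ is an immersion at every point, for every $b\in B$.

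No step here is a genuine obstacle. The substantive content lies entirely in Corollary \ref{cor:null}, and hence in Theorem \ref{th:CMI} with $\Fcal_b=0$. The only thing to watch is the nullity trick above, which is what guarantees that dropping the third coordinate preserves the immersion property.
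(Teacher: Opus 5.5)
Your proposal is correct and follows the paper's route exactly: the paper obtains Corollary~\ref{cor:C2} by taking the first two components of the map from Corollary~\ref{cor:null}. Your nullity argument ($f_1=f_2=0$ forces $f_3=0$) and your properness estimate $|F_b|\ge|\Re F_b|$ correctly supply the details the paper leaves implicit.
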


%
%
The special case of Corollary \ref{cor:C2} for a single map is related to 
the more ambitious conjecture by 
Schoen and Yau \cite[p.\ 18]{SchoenYau1997}  
that {\em no hyperbolic open Riemann surface admits a proper 
harmonic map into $\R^2$}. 
The first counterexample to their conjecture 
was given by Bo{\v z}in \cite{Bozin1999IMRN}, 
who found an explicit example of a 
proper harmonic map $\D\to\R^2$ from the disc. 
Another counterexample was given by Forstneri{\v c} and
Globevnik \cite{ForstnericGlobevnik2001MRL}, 
who constructed a proper holomorphic map $(f_1,f_2):\D\to\C^2$ 
with nowhere vanishing components, so 
$(\log|f_1|,\log|f_2|): \D\to\R^2$ is a proper harmonic map.
Counterexamples with arbitrary finite topology were obtained by Alarc\'on and G\'alvez \cite{AlarconGalvez2011}.
Finally, a construction of a proper harmonic map from any open Riemann surface into $\R^2$, different from the one in \cite{AlarconLopez2012JDG}, was carried out by Andrist and Wold \cite{AndristWold2014}.
See also the discussion in \cite[Sec.\ 3.10]{AlarconForstnericLopez2021}.

Recently, Drinovec Drnov\v sek and Kali\v snik 
\cite{DrinovecKalisnik2026JGA} proved that for every
$X$ and $\Jscr=\{J_b\}_{b\in B}$ as above there exists a
$\Jscr$-holomorphic map $F:B\times X\to\C^2$ such that 
the real part $\Re F_b:X\to\R^2$ of $F_b$ is a proper 
$J_b$-harmonic map for every $b\in B$.
(Their result holds for all metric parameter spaces $B$.) 
For parameter spaces $B$ used in this paper,
Corollary \ref{cor:C2} improves their result in 
that the map $F_b:X\to \C^2$ is a proper {\em immersion}
for every $b\in B$. 
Nevertheless, assuming that $B$ is as in 
\cite[Theorem 1.6]{Forstneric2024Runge} and using the existence of 
continuous families of $J_b$-holomorphic immersions $X\to\C$ 
\cite[Corollary 8.3]{Forstneric2024Runge}, 
it is possible to upgrade the proof in \cite{DrinovecKalisnik2026JGA} 
to obtain a continuous family of proper $J_b$-holomorphic maps 
$F_b=(F_{b,1},F_{b,2}):X\to \C^2$ such that every 
component function $F_{b,i}:X\to\C$ for $b\in B$ and $i=1,2$ 
is an immersion. 

The restriction to compact parameter spaces $B$ in this paper is mainly
out of convenience since it enables a geometrically simpler construction
in the proofs. We believe that, with more work
and using some ideas and technical arguments from the construction in \cite{DrinovecKalisnik2026JGA}, the results 
could be extended to countably compact spaces of the type used in 
\cite[Theorem 1.6]{Forstneric2024Runge}, including noncompact ones. 
Furthermore, we expect that one can prove a Runge approximation theorem  
for families of proper conformal minimal immersions in $\R^n$
for any $n\ge 3$. 

Note that the families of open Riemann surfaces $\{(X,J_b)\}_{b\in B}$
in the above results are considerably more general than the 
Teichm\"uller families, in which the individual surfaces are 
quasiconformally equivalent to one another.
Let us look at the topologically simplest examples when the surface 
$X$ is either simply or doubly connected.
Every complex structure on $X=\R^2$ is biholomorphically 
equivalent to the standard complex structure $\Jst$ on $\C$ 
or to its restriction to the unit disc $\D\subset \C$. 
We can put both examples in a compact connected 
family parameterised by $B=[0,1]$.
Every complex structure on $X=\R^2\setminus \{0\}$ is 
equivalent to $\Jst$ restricted to $\C^*=\C\setminus \{0\}$   
or to an annulus $A_r=\{z\in\C:r<|z|<1\}$ for $0\le r<1$.
For any $0<r_0<1$ one can realise all these structures with 
$0\le r\le r_0$ in a smooth family with the parameter space $B=[0,1]$.

Our proof of Theorem \ref{th:CMI} strongly depends on 
the recent work \cite{Forstneric2024Runge} by the second named
author. For suitable parameter spaces $B$ (more precisely, 
for local Euclidean neighbourhood retracts, a class of topological
spaces which includes all finite CW complexes and more generally
all countable locally compact CW-complexes of finite dimension), 
the main result of that paper is an Oka principle with approximation for 
$\Jscr$-holomorphic maps $B\times X\to Y$
to any Oka manifold $Y$; see \cite[Theorem 1.6]{Forstneric2024Runge}.
Applications include the construction 
of continuous families of holomorphic immersions $X\to\C^n$
for any $n\ge 1$, of holomorphic null immersions $X\to\C^n$
for any $n\ge 3$, and of conformal minimal immersions 
$X\to\R^n$ with any given family of flux homomorphisms 
for $n\ge 3$; see \cite[Sec.\ 8]{Forstneric2024Runge}. 
The main novelty of the results in this paper is that we construct
families of proper maps of the given types; in the case of
holomorphic immersions $X\to\C^n$, this requires $n\ge 2$. 
Properness is a nontrivial condition which is not easly achieved even
for single maps. In fact, proper maps form a meagre (topologically small) 
subset in any class of maps considered above, so it is not surprising 
that it is difficult to construct families of proper maps. 
With the approximation results from \cite{Forstneric2024Runge} in hand, our proof broadly follows the original construction in \cite{AlarconLopez2012JDG} (see also \cite[Section 3.11]{AlarconForstnericLopez2021}) of a minimal surface in $\R^3$ with given complex structure and proper projection into a plane. However, our approach introduces several major differences required to adapt the argument to the parametric setting. In particular, to compensate the lack of a parametric version of the Runge approximation theorem for conformal minimal immersions with fixed component functions (see \cite[Corollary 4.8 and Theorem 4.9]{AlarconLopez2012JDG} or \cite[Section 3.7]{AlarconForstnericLopez2021}), which has been a crucial tool in all previous constructions of proper minimal surfaces with arbitrary complex structure, we employ a parametric version of the L\'opez-Ros deformation for minimal surfaces \cite{LopezRos1991JDG}, developed by Alarc\'on and L\'arusson in \cite{AlarconLarusson2025JGA}, together with improved parametric versions of Gromov's convex integration lemma \cite{Gromov1973} (see Section \ref{ss:CI}).

We mention several open problems related to the results of this paper.

The analogous results for single maps in the above mentioned works hold with approximation on 
compact Runge subsets of $X$. We expect that this generalisation is also possible in the present setting, but the proofs 
become more complicated from the technical viewpoint. 

The second question is whether an h-principle holds 
in these results. It has recently been shown by 
Vrhovnik \cite{Vrhovnik2026JGEA}
that every nonflat conformal minimal immersion $X\to\R^n$, $n\ge 3$,
is homotopic through a family of such immersions to a proper one.
Does the analogous result hold for families?

The maps in our main results are immersions. 
Every open Riemann surface $X$ admits 
many proper conformal minimal {\em embeddings}
$X\hra \R^n$ for any $n\ge 5$, and proper holomorphic 
null embeddings $X\hra \C^n$ for any $n\ge 3$ 
(see \cite{AlarconForstneric2014IM,AlarconForstnericLopez2016MZ},
and \cite[Sec.\ 3.10]{AlarconForstnericLopez2021}). 
What can be said in this respect for families? 

Assuming that the parameter space 
$B$ is a smooth manifold, one may expect that refinements
of the techniques used in this paper, which are available in 
\cite{Forstneric2024Runge}, yield families of immersions 
of the given types depending smoothly on the parameter. 
In such a case, one may also hope that the use of transversality methods would yield families 
of embeddings of the given types when the target Euclidean 
space has sufficiently big dimension. 
We also expect that the analogue of Theorem \ref{th:CMI} 
holds for continuous families of 
conformal structures on a nonorientable smooth open 
surface $X$. Results in this direction for a single conformal structure
on $X$ can be found in 
\cite{AlarconLopez2015GT,AlarconForstnericLopezMAMS}.

The final problem concerns the universal
family $V(g,n)$ of $n$-punctured compact Riemann 
surfaces of genus $g$ with $n\ge 1$. The corresponding 
parameter space is the Teichm\"uller space $T(g,n)$,
which can be holomorphically 
realised as a bounded contractible Stein domain in a 
complex Euclidean space, and the natural Teichm\"uller projection 
$\pi:V(g,n)\to T(g,n)$ is a holomorphic submersion 
whose fibre over any point $b\in T(g,n)$ is the 
$n$-puncture compact Riemann surface $(X,J_b)$ 
of genus $g$ with the complex structure determined by $b$.
It has recently been proved by the second named author that 
$V(g,n)$ is a Stein manifold \cite[Theorem 1.1]{Forstneric2025Universal}.
Furthermore, there is a holomorphic map $F:V(g,n)\to\C^N$
for some $N\in\N$ whose restriction $F_b=F(b,\cdotp):(X,J_b)\to\C^N$ 
to any fibre is a proper algebraic embedding 
\cite[Corollary 3.5]{Forstneric2025Universal}.
Can one choose $F$ such that $F_b$ is a proper algebraic  
null embedding for every $b\in T(g,n)$? 
If so then the real part $\Re F:V(g,n)\to\R^N$ 
restricted to any fibre is a complete proper minimal surface
with finite total curvature. 

%
%
%
%
\section{Preliminaries} \label{sec:prelim}

\subsection{Families of complex structures on surfaces}
\label{ss:complexstructures}
Let $X$ be a smooth, connected, orientable surface
endowed with a smooth Riemannian metric.
A complex structure on $X$ is given by an endomorphism $J:TX\to TX$ 
of its tangent bundle satisfying $J^2=-\Id$. Thus, $J$ is a section of the smooth vector bundle $T^*X\otimes TX\to X$ whose fibre over 
$x\in X$ is the space $\Hom(T_xX,T_xX)$ of linear maps 
$T_x X\mapsto T_xX$. (Note that every orientable vector bundle 
on an open surface $X$ is trivial. When $X$ is orientable, 
this holds in particular for the tangent bundle $TX$ and its 
derived vector bundles.) 
The multiplication by $\imath=\sqrt{-1}$ defines the standard
complex structure $J_\mathrm{st}$ on $\C$. 
A differentiable function $f:X\to\C$ is 
{\em $J$-holomorphic} if it satisfies the Cauchy--Riemann equation 
\[
	df_x \circ J_x = \imath\, df_x = J_\mathrm{st} df_x,
	\quad x\in X. 
\]
A complex structure $J$ is said to be of local H\"older class 
$\Cscr^{(k,\alpha)}$ for some $k\in\Z_+=\{0,1,2,\ldots\}$ and $0<\alpha<1$
if, for every relatively compact domain $\Omega\Subset X$, the restriction
$J|_\Omega\in \Gamma^{(k,\alpha)}(\Omega,T^*\Omega\otimes T\Omega)$
is a section of class $\Cscr^{(k,\alpha)}(\Omega)$ 
of the restricted vector bundle $T^*\Omega\otimes T\Omega\to \Omega$. 
(The H\"older norms are computed with respect to 
the given Riemannian metric on $X$; see 
Gilbarg and Trudinger \cite[Sect. 4.1]{GilbargTrudinger1983}.) 
For such $J$, there is an atlas $\{(U_i,\phi_i)\}_i$ of open sets 
$U_i\subset X$ covering $X$ and $J$-holomorphic charts 
$\phi_i :U_i \to \phi_i(U_i)\subset \C$ 
of class $\Cscr^{(k+1,\alpha)}(U_i)$. (See 
\cite[Theorem 5.3.4]{AstalaIwaniecMartin2009} among other references.) 
It follows that the smooth structure on $X$ determined by 
a complex structure $J$ of local class $\Cscr^{(k,\alpha)}$ is 
$\Cscr^{(k+1,\alpha)}$ compatible with the given smooth structure
on $X$; see \cite[Theorem 2.1]{BojarskiAll2005}.

A family $\Jscr=\{J_b\}_{b\in B}$ of complex structures on $X$
is said to be of class $\Cscr^{0,(k,\alpha)}$ if 
for any relatively compact domain $\Omega\Subset X$ the map 
$B\ni b\mapsto J_b|_{\Omega}\in \Gamma^{(k,\alpha)}(\Omega,T^*\Omega\otimes T\Omega)$ is continuous.
If $k=0$, we write $\Cscr^{(0,\alpha)}(\Omega)=\Cscr^\alpha(\Omega)$. 
The proofs of our main results immediately extend to the case
when $\Jscr$ is of class $\Cscr^{0,(k,\alpha)}$ for any $k\in\Z_+$,
and they yields maps as in Theorem \ref{th:CMI} and Corollary 
\ref{cor:null} which are of class $\Cscr^{0,(k+1,\alpha)}$ 
in the reference complex structure on $X$.
(See \cite[Theorem 8.2 and Corollary 8.6]{Forstneric2024Runge}
for the construction of families of not necessarily proper maps
of these classes.)

If the parameter space $B$ is a manifold of class $\Cscr^l$
for some $l>0$, we can also introduce the notion of a family $\Jscr$ 
of class $\Cscr^{l,(k,\alpha)}$; see \cite{Forstneric2024Runge}.
In view of \cite[Theorem 1.6]{Forstneric2024Runge}, 
one may expect that our main results can be extended
to families of conformal minimal immersions and null immersions
of class $\Cscr^{l,(k+1,\alpha)}$ whenever $0\le l\le k+1$
and $0<\alpha<1$. However, for simplicity we shall only consider 
the case $l=0$ in this paper, that is, 
continuous dependence on the parameter.

Fix a reference smooth complex structure $J$ on $X$.
By a theorem of Gunning and Narasimhan \cite{GunningNarasimhan1967},
the open Riemann surface $(X,J)$ admits a holomorphic immersion 
$z:X\to \C$, which therefore provides a local $J$-holomorphic coordinate 
on $X$ at every point. A family $\Jscr=\{J_b\}_{b\in B}$ 
of complex structures in the same orientation class as $J$ 
can then equivalently be given by 
a family of maps $\mu_b:X\to \D=\{\zeta\in\C:|\zeta|<1\}$
of the same local smoothness class as $J_b$
and the same regularity in the parameter $b\in B$,
with $\mu_b=0$ corresponding to the reference structure $J_b=J$.  
(See \cite[Sec.\ 2]{Forstneric2024Runge} for the details.)  
Such a function $\mu_b$ is called a Beltrami multiplier.
Given an open subset $U\subset X$,
any solution $f:U\to\C$ of the Beltrami equation 
\begin{equation}\label{eq:Beltrami}
	f_{\bar z} = \mu_b f_z
\end{equation}
is a holomorphic map from $(U,J_b)$ to $(\C,\Jst)$.
(If $U$ is a domain in $\C$ and $\|\mu_b\|_{U,\infty}<1$,
where $\|\cdotp\|_\infty$ is the essential supremum,   
a map $f$ satisfying \eqref{eq:Beltrami} is also called 
$\mu_b$-quasiconformal.) 
The theory of quasiconformal maps on domains in $\C$ was
developed by Ahlfors and Bers \cite{AhlforsBers1960},
also for Beltrami multipliers in $L^p$ spaces for $p>1$.
(See the survey and references in \cite[Sec.\ 2]{Forstneric2024Runge}.) 
When $2<p<\infty$, any local solution of \eqref{eq:Beltrami}
is of H\"older class $\Cscr^\alpha$ with $\alpha=1-2/p$,
while for Beltrami multipliers of class $\Cscr^{(k,\alpha)}$
with $k\in\Z_+$ and $0<\alpha<1$, 
the equation \eqref{eq:Beltrami} has local solutions 
of class $\Cscr^{(k+1,\alpha)}$. Furthermore, solutions in these 
function spaces can be chosen to depend locally analytically on $\mu$,
so they are as regular in the parameter $b$ as the 
family $\mu_b$ (or, equivalently, $J_b$).

Ahlfors and Bers also developed the global theory of quasiconformal
maps on planar domains. 
By \cite[Theorem 6, p.\ 396]{AhlforsBers1960}, for any $\mu$
on the plane or the disc with $\|\mu\|_{\infty}<1$ 
there exist unique $\mu$-conformal homeomorphisms of the whole plane 
and the unit disk onto themselves with fixed points at 0, 1, $\infty$ 
and 0,1 respectively. We state the following special case for later reference.

%
%
\begin{proposition}\label{prop:disc}
Given a continuous family $\{J_b\}_{b\in B}$ of complex structures 
of class $\Cscr^\alpha(\bar \D)$ on the closed disc $\bar\D$,  
there is a unique continuous family of diffeomorphisms 
$\phi_b:\bar \D \to \bar \D$ of class $\Cscr^{(1,\alpha)}(\bar \D)$ such 
that $\phi_b:\D\to\D$ is $J_b$-holomorphic and satisfies 
$\phi_b(0)=0,\ \phi_b(1)=1$ for every $b\in B$.
\end{proposition}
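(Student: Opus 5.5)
We will obtain the family $\phi_b$ by solving the Beltrami equation on the whole plane and using the existence and uniqueness theorem of Ahlfors and Bers \cite[Theorem 6, p.\ 396]{AhlforsBers1960}, together with its continuous dependence on the Beltrami coefficient. First, we represent $J_b$ on $\bar\D$ by its Beltrami multiplier $\mu_b:\bar\D\to\D$ with respect to the standard coordinate $z$. Then $\{\mu_b\}_{b\in B}$ is a continuous family in $\Cscr^\alpha(\bar\D)$. Since $B$ is compact and each $\mu_b$ is continuous on the compact set $\bar\D$, we get $\sup_b\|\mu_b\|_\infty\le k<1$. We then extend $\mu_b$ to $\C$ by reflection, setting
\[
\tilde\mu_b(z)=\overline{\mu_b(1/\bar z)}\,\frac{z^2}{\bar z^2}\quad\text{for } |z|>1 .
\]
This extension is chosen so that the normalised $\tilde\mu_b$-conformal homeomorphism $\Phi_b$ of $\hat\C$ fixing $0,1,\infty$ commutes with the reflection $z\mapsto 1/\bar z$. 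By uniqueness, $\Phi_b$ therefore maps the unit circle onto itself and $\D$ onto $\D$. The restriction $\phi_b=\Phi_b|_{\bar\D}$ is then the $\mu_b$-conformal self-map of the disc fixing $0$ and $1$.

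The next step is regularity up to the boundary. The reflected coefficient $\tilde\mu_b$ may fail to be H\"older across $|z|=1$, so we would not rely on it there. Instead we argue locally. Take a $\Cscr^{(1,\alpha)}$ solution $g$ of the Beltrami equation near a boundary point; it exists by the local theory recalled before the proposition, after extending $\mu_b$ to a $\Cscr^\alpha$ function on a neighbourhood of $\bar\D$. The map $g$ is a $\Cscr^{(1,\alpha)}$ diffeomorphism onto a domain with $\Cscr^{(1,\alpha)}$ boundary arc. Now $\phi_b\circ g^{-1}$ is a conformal map between domains with $\Cscr^{(1,\alpha)}$ boundary arcs, so it is $\Cscr^{(1,\alpha)}$ up to the boundary with nonvanishing derivative by the Kellogg--Warschawski theorem. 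Hence $\phi_b$ is a $\Cscr^{(1,\alpha)}$ diffeomorphism of $\bar\D$. Uniqueness follows because two such maps differ by a conformal automorphism of $\D$ fixing $0$ and $1$, which must be the identity.

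The main obstacle is continuity of $b\mapsto\phi_b$ in $\Cscr^{(1,\alpha)}(\bar\D)$. Ahlfors--Bers gives continuity in the parameter only in the uniform ($\Cscr^0$) topology. To upgrade it, we would redo the construction with estimates. Write $\phi_b$ locally as $h_b\circ g_b$, where $g_b$ solves the Beltrami equation via the Cauchy and Beurling transforms, a Neumann series $(I-\mu_b S)^{-1}$ acting on $\Cscr^\alpha$. The local solutions $g_b$ then depend continuously, indeed analytically, on $\mu_b$ in $\Cscr^{(1,\alpha)}$, as recalled before the proposition. The conformal factors $h_b$ have $\Cscr^{(1,\alpha)}$ boundary estimates that are uniform in the domain, by a parametric Kellogg--Warschawski theorem. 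Combining these uniform bounds with the $\Cscr^0$ convergence $\phi_{b'}\to\phi_b$ and interpolation gives continuity in $\Cscr^{(1,\beta)}$ for all $\beta<\alpha$. To reach exactly $\alpha$, we would use the continuous dependence of the Kellogg estimates on the boundary curve in $\Cscr^{(1,\alpha)}$ directly. This is the delicate step; if needed one can cite \cite[Sec.\ 2]{Forstneric2024Runge} for it.
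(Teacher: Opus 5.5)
The paper gives no argument of its own for this proposition. It presents it as a special case of Ahlfors--Bers [Theorem 6, p.\ 396], which gives existence and uniqueness of the normalised $\mu$-conformal self-map of $\D$ fixing $0$ and $1$. This is read together with the facts recalled just before the statement: $\Cscr^\alpha$ Beltrami multipliers have $\Cscr^{(1,\alpha)}$ solutions depending analytically on $\mu$.

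Your reconstruction of the pointwise statement is a sound and more self-contained route.
\begin{itemize}
\item The reflected coefficient $\overline{\mu_b(1/\bar z)}\,z^2/\bar z^2$ is the right one, and uniqueness of the normalised plane solution forces $\Phi_b$ to commute with $z\mapsto 1/\bar z$.
\item You rightly avoid using $\tilde\mu_b$ for regularity, since its boundary values from outside are $\overline{\mu_b}\,z^4\neq\mu_b$ in general. A local $\Cscr^{(1,\alpha)}$ solution combined with a local Kellogg--Warschawski theorem gives the diffeomorphism property up to $b\D$.
\item The Schwarz lemma gives uniqueness.
\item Uniform Kellogg bounds plus Arzel\`a--Ascoli do give continuity of $b\mapsto\phi_b$ in $\Cscr^{(1,\beta)}(\bar\D)$ for every $\beta<\alpha$.
\end{itemize}

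The gap is continuity in $\Cscr^{(1,\alpha)}(\bar\D)$ at the exact exponent. You do not prove it, and the route you indicate cannot deliver it. Near $b\D$ you would need $h_b'\circ g_b\to h_{b_0}'\circ g_{b_0}$ in $\Cscr^\alpha$. This contains the term $h_{b_0}'\circ g_b-h_{b_0}'\circ g_{b_0}$, where $h_{b_0}'$ is only $\Cscr^\alpha$ up to the boundary arc. Composition with a fixed $\Cscr^\alpha$ function is not continuous for the $\Cscr^\alpha$ norm: the translates $|x+h|^\alpha$ stay at $\Cscr^\alpha$-distance at least $2$ from $|x|^\alpha$. ``Continuous dependence of the Kellogg estimates on the curve'' hits the same obstruction.

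A way to close this that involves no varying compositions is the implicit function theorem.
\begin{itemize}
\item The map $(\mu,\phi)\mapsto\bigl(\phi_{\bar z}-\mu\phi_z,\ |\phi|^2|_{b\D}-1,\ \phi(0),\ \Im\phi(1)\bigr)$ is polynomial, hence smooth, from $\Cscr^\alpha(\bar\D)\times\Cscr^{(1,\alpha)}(\bar\D)$ to $\Cscr^\alpha(\bar\D)\times\Cscr^{(1,\alpha)}(b\D)\times\C\times\R$.
\item Write $\dot\phi=\psi\circ\phi_{b_0}$. Up to the fixed substitution by $\phi_{b_0}^{-1}$ and a nonvanishing $\Cscr^\alpha$ factor, the $\phi$-derivative at $(\mu_{b_0},\phi_{b_0})$ becomes the Riemann--Hilbert operator $\psi\mapsto\bigl(\psi_{\bar w},\ 2\Re(\bar w\psi)|_{b\D},\ \psi(0),\ \Im\psi(1)\bigr)$.
\item Without the normalisation its kernel is $\{\beta+\imath s w-\bar\beta w^2:\beta\in\C,\ s\in\R\}$, which the normalisation kills. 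By the classical H\"older theory the operator is therefore an isomorphism.
\item The implicit function theorem then gives analytic dependence of $\phi_b$ on $\mu_b\in\Cscr^\alpha$.
\end{itemize}
Alternatively, cite this dependence explicitly, as the paper in effect does, rather than presenting the interpolation as a proof of it.
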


A similar statement holds for a continuous family 
$\{J_b\}_{b\in B}$ of complex structures 
of class $\Cscr^\alpha$ on $\R^2$ which are quasiconformally
equivalent to the standard structure $\Jst$ on $\C$.

The Ahlfors--Bers theory extends to smoothly bounded relatively
compact domains in smooth open surfaces.
If $\Jscr=\{J_b\}_{b\in B}$ is a continuous family
of complex structures of local class $\Cscr^{(k,\alpha)}$ on a smooth
open surface $X$ and $\Omega$ is a smoothly bounded relatively
compact domain in $X$, then every point $b_0\in B$ has a neighbourhood
$B_0\in B$ and a continuous family of $(J_b,J_{b_0})$-holomorphic
diffeomorphisms $\Phi_b:\Omega\to \Phi_b(\Omega)\subset X$,
$b\in B_0$, of class $\Cscr^{(k+1,\alpha)}(\Omega)$ (see  
\cite[Corollary 4.5]{Forstneric2024Runge}).
This result plays a major role in the proof of the Oka principle
for $\Jscr$-holomorphic maps $F:B\times X\to Y$ to any 
Oka manifold $Y$ (see \cite[Theorem 1.6]{Forstneric2024Runge}).
In turn, this Oka principle is the underlying tool to prove 
the results in this paper. It will be applied for maps to the 
punctured null quadric 
\begin{equation}\label{eq:nullq}
	\boldA =\bigl\{(z_1,\ldots,z_n)\in \C^n_* = \C^n\setminus \{0\}: 
	z_1^2+z_2^2+\cdots + z_n^2=0\bigr\},\quad n\ge 3,
\end{equation}
and also to some other Oka manifolds. 
(For the theory of Oka manifolds, see \cite[Chap.\ 5]{Forstneric2017E}.)

\subsection{Holomorphic null immersions and conformal minimal immersions}\label{ss:CMI}
We recall the basics on holomorphic
null immersions and conformal minimal immersions
of open Riemann surfaces to $\C^n$ and $\R^n$, respectively. 
For more information, see the monographs
\cite{Osserman1986,AlarconForstnericLopez2021}, among
many other sources. 

A holomorphic immersion $F=(F_1,F_2,\ldots,F_n):X\to \C^n$, $n\ge 3$, from 
an open Riemann surface $X$ is said to be a null immersion if its differential
$dF$ satisfies the nullity condition 
\[
	(dF_1)^2 + (dF_2)^2 + \cdots + (dF_n)^2 =0.
\]
Choosing a nowhere vanishing holomorphic $1$-form 
$\theta$ on $X$ (such exists by \cite{GunningNarasimhan1967}),
we have $dF=f\theta$ where $f=(f_1,\ldots,f_n):X\to \boldA$
is a holomorphic map to the punctured null quadric 
\eqref{eq:nullq}. Conversely, fixing a point $x_0\in X$,
a holomorphic map $f:X\to \boldA$ 
such that $f\theta$ is an exact $1$-form on $X$ determines a 
holomorphic null immersion $F:X\to\C^n$ by
\begin{equation}\label{eq:nullcurve}
	F(x)=F(x_0)+\int_{x_0}^x f\theta,  \qquad x\in X.
\end{equation}
An immersion $u=(u_1,u_2,\ldots,u_n):X\to\R^n$, $n\ge 3$, is conformal 
if and only if its $(1,0)$-differential $\di u=(\di u_1,\di u_2,\ldots,\di u_n)$ 
satisfies
\[
	(\di u_1)^2 + (\di u_2)^2 + \cdots + (\di u_n)^2=0.
\]
A conformal immersion $u$ is minimal if and only
if it is harmonic, 
\[
	dd^c u = -2\imath \dibar\di u=0.
\] 
In this case, $2\di u=f\theta$ where $f: X\to \boldA$ is 
a holomorphic map, and we recover $u$ from $f$ by 
\begin{equation}\label{eq:CMI}
	u(x)= u(x_0) + \Re \int_{x_0}^x f\theta,\qquad x\in X.
\end{equation}
The formulas \eqref{eq:nullcurve} and \eqref{eq:CMI} are 
known as the {\em Weierstrass representation} of 
holomorphic null curves and conformal minimal surfaces,
respectively, and $f\theta$ is called the Weierstrass data 
of $F$ or $u$.

In the special case $n=3$ considered in this paper, a 
more explicit Weierstrass representation formula for 
holomorphic null immersions $F:X\to\C^3$ 
with $dF=f\theta=(f_1,f_2,f_3)\theta$ is given by
\begin{equation}\label{eq:null3}
	F(x) = F(x_0)+ \int_{x_0}^x 
	\Big(1,\frac12\Big(g-\frac{1}{g}\Big),
	\frac{\imath}{2} \Big(g+\frac{1}{g}\Big) \Big) f_1\theta,
\end{equation}
where $g:X\to \CP^1$ is, up to a change of coordinates,
the (holomorphic) Gauss map of $F$.
(See \cite[p.\ 101 and Sec.\ 2.5]{AlarconForstnericLopez2021}
for the details.) The real part of the above integral gives a formula
reproducing conformal minimal surfaces $u:X\to\R^3$
from its Weierstrass data. 

Given a continuous family $\Jscr=\{J_b\}_{b\in B}$ of complex structures
on $X$, there is a continuous family $\{\theta_b\}_{b\in B}$ of 
nowhere vanishing $J_b$-holomorphic 1-forms on $X$
(see \cite[Theorem 7.1]{Forstneric2024Runge}). 
Using such a family, the analogous representation formulas 
hold for families of $\Jscr$-holomorphic null curves and conformal
minimal surfaces with continuous dependence on $b\in B$. 

Let us record the following consequence of Proposition \ref{prop:disc}.

%
%
\begin{proposition}\label{prop:CMIdisc}
Assume that $u:\D\to \R^n$ is a conformal minimal immersion.
Given a continuous family of complex structures $\Jscr=\{J_b\}_{b\in B}$
of class $\Cscr^\alpha(\bar \D)$ on $\bar\D$, there is a 
continuous family of $J_b$-conformal harmonic immersions 
$u_b:\D\to\R^n$ satisfying $u_b(\D)=u(\D)$ for all $b\in B$. 
\end{proposition}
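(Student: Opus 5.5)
Set $u_b = u\circ\phi_b$, where $\phi_b$ is the family given by Proposition \ref{prop:disc}; then we verify the required properties one by one.

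\emph{Step 1: the family of diffeomorphisms.} Apply Proposition \ref{prop:disc} to the family $\Jscr$ on $\bar\D$. It gives a continuous family of diffeomorphisms $\phi_b:\bar\D\to\bar\D$ of class $\Cscr^{(1,\alpha)}(\bar\D)$. Each $\phi_b:(\D,J_b)\to(\D,\Jst)$ is holomorphic and maps $\D$ onto $\D$. Here $u$ is understood to be $\Jst$-conformal and harmonic on $\D$.

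\emph{Step 2: the maps $u_b$ have the right properties.} Define $u_b=u\circ\phi_b:\D\to\R^n$.
\begin{enumerate}[(i)]
\item Since $\phi_b$ is a bijection of $\D$ onto itself, $u_b(\D)=u(\D)$.
\item A diffeomorphism composed with an immersion is an immersion, so $u_b$ is an immersion.
\item $\phi_b$ is a $(J_b,\Jst)$-biholomorphism, so its differential satisfies $d\phi_b\circ J_b=\Jst\circ d\phi_b$. Hence $\di_{J_b}u_b=\phi_b^*(\di u)$.
\item By (iii), the nullity condition $\sum_j(\di u_{b,j})^2=0$ pulls back from that of $u$. So $u_b$ is $J_b$-conformal.
\item Also by (iii), $\di_{J_b}u_b$ is a $J_b$-holomorphic 1-form, so $u_b$ is $J_b$-harmonic.
\end{enumerate}
Equivalently, in any $J_b$-holomorphic chart, $u_b$ is $u$ composed with a holomorphic map. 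The regularity is consistent: $\phi_b$ is $\Cscr^{1,\alpha}$ and $u$ is real analytic, so $u_b$ is $\Cscr^{1,\alpha}$. This is exactly the regularity expected of $J_b$-harmonic maps when $J_b$ is of class $\Cscr^\alpha$.

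\emph{Step 3: continuity in $b$.} The family $b\mapsto\phi_b$ is continuous in $\Cscr^{1,\alpha}(\bar\D)$, and $u$ is smooth on $\D$. Therefore $(b,x)\mapsto u(\phi_b(x))$ is continuous on $B\times\D$. Its first derivatives also depend continuously on $b$ locally uniformly on $\D$, since $u$ is $\Cscr^1$ on compacts and $\phi_b$ maps compacts into compacts.

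\emph{Expected subtlety.} The only real point is the interpretation of the statement and the direction of holomorphicity. We need $\phi_b$ to be holomorphic from $J_b$ to $\Jst$, so that precomposition transports $\Jst$-conformality to $J_b$-conformality. We also need $\phi_b$ to be onto $\D$, which is what gives $u_b(\D)=u(\D)$. Both are provided by Proposition \ref{prop:disc}. No boundary behaviour of $u$ is needed, because everything is checked on the open disc.
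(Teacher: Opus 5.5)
Your proposal is correct and is the paper's proof: set $u_b=u\circ\phi_b$, with $\phi_b$ the family of $J_b$-holomorphic diffeomorphisms of $\D$ given by Proposition \ref{prop:disc}. The paper simply says this family "clearly" works; your Steps 2--3 spell out the routine checks it leaves implicit, namely that $\di_{J_b}u_b$ is the pullback of $\di u$, that $u_b$ is an immersion, that $u_b(\D)=u(\D)$, and that $u_b$ depends continuously on $b$.
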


\begin{proof}
Letting $\phi_b:\D\to\D$ be a family of $J_b$-holomorphic
diffeomorphisms given by Proposition \ref{prop:disc}, the 
family $u_b=u\circ \phi_b:\D\to\R^n$, $b\in B$, clearly satisfies 
Proposition \ref{prop:CMIdisc}. The analogous conclusion 
holds for null curves $\D\to\C^n$.
\end{proof}

%
%
\subsection{Convex integration lemmas}\label{ss:CI}
We shall use the parametric version of 
Gromov's {\em convex integration lemma}; 
see Gromov \cite[2.1.7.\ One-Dimensional Lemma]{Gromov1973}
for the basic case and Spring \cite[Theorem 3.4, p.\ 39]{Spring1998}
for the parametric case. 
By $\Co(A)$ we denote the convex hull of a subset $A\subset\R^m$.
The first lemma concerns the punctured null quadric 
$\boldA\subset\C^{n}$, $n\ge 3$ \eqref{eq:nullq}.

\begin{lemma}\label{lem:CI1}
Let $B$ be a compact Hausdorff space, 
$f_0:B\times [0,1]\to \boldA$ a continuous map, 
and $F_0: B\times [0,1]\to \C^n$ a continuous map such that 
$F_0(b,\cdotp):[0,1]\to \C^n$ is of class $\Cscr^1([0,1])$, 
its $t$-derivative $\dot F_0(b,t)=\frac{\di}{\di t}F_0(b,t)$ 
depends continuously on $(b,t)\in B\times [0,1]$, and 
\[
	\dot F_0(b,0)=f_0(b,0), \quad 
	\dot F_0(b,1)=f_0(b,1)\quad \text{for all $b\in B$}.
\]
Given $\epsilon >0$ there is a continuous map
$F:B \times  [0,1] \to \C^n$ satisfying the following for all $b\in B$:
\begin{enumerate}[\rm (a)]
\item In the endpoints $t=0,1$ of $[0,1]$, the values of 
$F(b,\cdotp)$ and its first $t$-derivatives coincide 
with those of $F_0(b,\cdotp)$.
\item $\|F-F_0\|_\infty<\epsilon$. 
\item $f(b,t):=\dot F(b,t) \in \boldA$ for all $t\in [0,1]$.
\end{enumerate}
\end{lemma}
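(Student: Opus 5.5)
Everything reduces to the parametric one-dimensional convex integration lemma of Gromov and Spring, \cite[Theorem 3.4]{Spring1998}. The key geometric fact is that the punctured null quadric $\boldA\subset\C^n$, $n\ge 3$, is connected and that its convex hull (as a subset of $\R^{2n}$) is all of $\C^n$. This is standard: $\boldA$ is a connected complex cone, it is not contained in any real affine hyperplane, and $\Co(\boldA)=\C^n$ (see \cite[Lemma 3.4.2 or Sec.\ 3.3]{AlarconForstnericLopez2021}).

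The plan is as follows. Set $g(b,t)=\dot F_0(b,t)$, a continuous map $B\times[0,1]\to\C^n$. Since $\Co(\boldA)=\C^n$, the point $g(b,t)$ lies in the interior of the convex hull of the path component of $\boldA$ containing $f_0(b,t)$; here the component is all of $\boldA$. Thus the hypotheses of Spring's parametric one-dimensional lemma hold for the ample open set $\boldA$ and the base path $f_0$. That lemma gives a continuous family of loops, or more precisely a map $h:B\times[0,1]\times[0,1]\to\boldA$ with $h(b,t,0)=h(b,t,1)=f_0(b,t)$, whose average over $s$ equals $g(b,t)$. We then define
\[
	f(b,t)=h(b,t,\{Nt\}), \qquad F(b,t)=F_0(b,0)+\int_0^t f(b,\tau)\,d\tau
\]
for a large integer $N$. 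The boundary condition on $h$ makes $f$ continuous.

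Estimate (b) follows from uniform continuity on the compact set $B\times[0,1]^2$. On each interval $[k/N,(k+1)/N]$ the integral of $f$ differs from that of $g$ by $O(\omega(1/N)/N)$, where $\omega$ is a modulus of continuity. Summing over the $N$ intervals gives $\|F-F_0\|_\infty\to0$ as $N\to\infty$. Condition (c) holds by construction. For (a), note that $f(b,0)=h(b,0,0)=f_0(b,0)=\dot F_0(b,0)$, and likewise at $t=1$ because $\{N\}=0$, so the derivatives match at the endpoints. The value $F(b,0)$ matches by definition.

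The value at $t=1$ is not yet exact: $F(b,1)-F_0(b,1)$ is small but nonzero. I expect this correction, done continuously in $b$, to be the main obstacle. To handle it, fix finitely many continuous families of loops in $\boldA$ based at $f_0$, inserted on small subintervals, whose integrals span $\C^n$ with continuous dependence on $b$. A simple choice is to perturb $h$ by $h_\lambda$, depending continuously on a parameter $\lambda$ in a small ball of $\C^n$, with the average of $h_\lambda$ equal to $g+\lambda$. Such a family exists because $g(b,t)+\lambda$ still lies in $\Co(\boldA)$, and Spring's lemma applies with the extra parameter $\lambda$. The map $\lambda\mapsto F_\lambda(b,1)-F_0(b,1)$ is then uniformly close to $\lambda\mapsto\lambda$ for large $N$. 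A continuous-in-$b$ degree or fixed-point argument, for instance a contraction argument using the near-identity, or Brouwer applied fibrewise, can fail to be continuous in $b$. I would therefore first try to arrange affine dependence on $\lambda$: take $h_\lambda=h+\lambda$ on a middle subinterval where $h$ stays away from $0$, and then project back to $\boldA$. Equivalently, use the continuous family $\lambda(b)$ obtained from the inverse function theorem with parameters, since the map $\lambda\mapsto F_\lambda(b,1)$ can be made $C^1$-close to a translation uniformly in $b$. This yields $F(b,1)=F_0(b,1)$ while preserving (b) and (c).
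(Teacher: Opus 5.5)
Your architecture is the same as the paper's: apply Spring's parametric one-dimensional lemma to get $F$ satisfying (b), (c) and (a) except for the value $F(b,1)$, then correct that value by a spray and an implicit function argument. Two steps do not work as written, and the second is a genuine gap. The first is minor. Spring's Theorem~3.4, like Gromov's lemma, concerns \emph{open} sets, and $\boldA$ is a complex hypersurface in $\C^n_*$, not an ample open set. So you cannot quote it to get loops in $\boldA$ whose average is exactly $g(b,t)$. The paper applies the lemma in a thin tubular neighbourhood $\Omega\supset\boldA$, which still has $\Co(\Omega)=\C^n$, and composes with a retraction $\rho\colon\Omega\to\boldA$. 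The averages are then only approximately $g$. This is harmless for (b), but it is one more reason why $F(b,1)\neq F_0(b,1)$.

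The real gap is the endpoint correction. Spring's lemma with the extra parameter $\lambda$ gives only $C^0$ control in $\lambda$, as you note yourself. In your alternative, $f_\lambda=\rho(f+\lambda\chi)$ with $\chi\ge 0$ a cutoff on the middle subintervals. There $\lambda\mapsto F_\lambda(b,1)$ is $C^1$, but it is not $C^1$-close to a translation. Its differential at $\lambda=0$ is $\int_0^1\chi(t)\,d\rho_{f(b,t)}\,dt$, an average of projections onto the real codimension-two subspaces $T_{f(b,t)}\boldA$. For the nearest-point retraction this operator is invertible if and only if these tangent spaces have no common normal vector. Since the normal space to $\boldA$ at $z$ is $\C\bar z$, this means that the points $f(b,t)$, $t\in\mathrm{supp}\,\chi$, do not all lie on one complex line through $0$. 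Nothing in your construction guarantees this, let alone uniformly in $b$ and $N$. For example, if $\dot F_0(b_0,\cdot)=f_0(b_0,\cdot)$ is constant for some $b_0$, Spring's loops may be (nearly) constant for $b$ near $b_0$. Then the differential is (nearly) singular there, and the implicit function theorem with parameters gives no neighbourhood of uniform size in which to absorb the defect $F(b,1)-F_0(b,1)$.

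The missing idea is what the paper imports from \cite{ForstnericLarusson2019CAG}. First make the curves $t\mapsto f(b,t)\in\boldA$ nondegenerate for every $b\in B$. Then embed them in a \emph{period dominating} spray: one for which $\lambda\mapsto\int_0^1 f_\lambda(b,t)\,dt$ is submersive at $\lambda=0$ for each $b$, hence uniformly by compactness of $B$. Only then does the parametric implicit function theorem give a continuous $b\mapsto\lambda(b)$ with $F(b,1)=F_0(b,1)$.
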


\begin{proof}
When $B=\{b_0\}$ is a singleton and $\boldA$ is replaced by an
open subset $\Omega\subset \C^n$ with $\Co(\Omega)=\C^n$,
the lemma coincides with \cite[2.1.7]{Gromov1973}. 
The parametric case follows from \cite[Theorem 3.4, p.\ 39]{Spring1998}.
See also \cite[proof of Lemma 3.1]{ForstnericLarusson2019CAG},
where it is shown how to replace an open subset $\Omega\subset \C^n$ 
in the stated result with the submanifold $\bold A\subset \C^n$.
The idea of proof is to let $\Omega$ be a small open tubular neighbourhood
of $\bold A$, with a smooth retraction $\rho:\Omega\to \boldA$. 
Hence, $\Co(\Omega)=\C^n$. 
Applying \cite[Theorem 3.4, p.\ 39]{Spring1998} to maps 
$B\times [0,1]\to \Omega$ and postcomposing them by the retraction 
$\rho:\Omega\to \bold A$ gives a map $f:B\times [0,1]\to \boldA$,
obtained by suitably deforming the map $f_0$ with fixed ends at $t=0,1$,
such that the map $F:B\times [0,1]\to \C^n$ defined by 
\[
	F(b,t)= F_0(b,0) + \int_{0}^t f(b,s)\, ds,\quad t\in [0,1],\ b\in B
\]
satisfies the lemma, except that $F(b,1)$ is only close to $F_0(b,1)$ 
but not necessarily equal to it. 
As shown in \cite[Proof of Theorem 1.1]{ForstnericLarusson2019CAG}, 
the identity $F(b,1)=F_0(b,1)$ for all $b\in B$ 
can be obtained by a suitable correction, using period dominating 
sprays on a family of nondegenerate curves $t\mapsto f(b,t)\in \boldA$.
It is also explained in \cite[Remark 3.2]{ForstnericLarusson2019CAG}
why we can use any compact Hausdorff space $B$ as the parameter space,
as opposed to merely manifolds of class $\Cscr^1$
in the hypothesis of \cite[Theorem 3.4, p.\ 39]{Spring1998}.
\end{proof}

Our second lemma pertains to maps $F(b,t)$ 
whose $t$-derivatives belong to a variable family of closed 
algebraic submanifolds of $\C^n$.

\begin{lemma}\label{lem:CI2}
Let $B$ be a compact Hausdorff space, $\wt B=B\times [0,1]$,
and $\{A_{b,t}: b\in B,\ t\in [0,1]\}$ a continuous family of
connected, closed, algebraic submanifolds of $\C^n$, $n\ge 2$,  
none of them contained in any affine hyperplane of $\C^n$. 
Denote by $\pi:Z=\wt B\times \C^n \to \wt B$ the natural projection,
and let $\Acal\subset Z$ be the subset whose fibre over $(b,t)\in \wt B$
equals $A_{b,t}$. Let $f_0:\wt B \to \Acal\subset Z$ 
and $F_0:\wt B\to Z$ be continuous sections of $\pi:Z\to\wt B$ 
such that $F_0(b,\cdotp)$ is of class $\Cscr^1([0,1])$, 
its $t$-derivative $\dot F_0(b,t)=\frac{\di}{\di t}F_0(b,t)\in\C^n$ 
depends continuously on $(b,t)\in \wt B$, and 
\[
	\dot F_0(b,0)=f_0(b,0), \quad 
	\dot F_0(b,1)=f_0(b,1)  \quad \text{for all $b\in B$}.
\]
Given $\epsilon >0$ there is a section 
$F:B \times  [0,1] \to Z$ of the same class as $F_0$
and satisfying the following conditions for all $b\in B$.
\begin{enumerate}[\rm (a)]
\item $F(b,0)=F_0(b,0)$, $\dot F(b,0)=f_0(b,0)$, and 
$\dot F(b,1)=f_0(b,1)$.
\item $\|F-F_0\|_\infty<\epsilon$. 
\item $f(b,t):=\dot F(b,t) \in A_{b,t}$ for all $t\in [0,1]$.
\end{enumerate}
\end{lemma}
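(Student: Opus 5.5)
This is a parametric convex integration argument for a fibrewise family of targets, run the same way as Lemma \ref{lem:CI1}. Note that condition (a) asks only for $F(b,0)=F_0(b,0)$ and the derivative conditions at both ends. It does \emph{not} ask for $F(b,1)=F_0(b,1)$. So no period-correction step is needed, and the main work is to produce a family of derivative curves in $A_{b,t}$ whose integrals approximate $F_0$.

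\textbf{Step 1: thicken the targets.} Each $A_{b,t}$ is a closed submanifold that depends continuously on $(b,t)$ in the compact space $\wt B$. I would choose a small open fibrewise tubular neighbourhood $\Omega\subset Z$ of $\Acal$, with fibres $\Omega_{b,t}\supset A_{b,t}$, together with a continuous fibrewise retraction $\rho:\Omega\to\Acal$ that is close to the identity. The retraction could be nearest-point projection on a neighbourhood of uniformly controlled width over compact pieces.

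\textbf{Step 2: check the convex hull condition.} The key geometric input is that $\Co(A_{b,t})=\C^n$. Since $A_{b,t}$ is connected, the convex hull of each connected component of $\Omega_{b,t}$ containing $A_{b,t}$ should then be all of $\C^n$. To see this, I would argue as follows. A closed connected algebraic submanifold not contained in any affine hyperplane cannot lie in any closed half-space $\{\Re\lambda\le c\}$. Indeed, a real-linear functional that is bounded above on $A_{b,t}$ restricts to a pluriharmonic function bounded above on an affine algebraic variety. Such a function is constant on each algebraic curve in the variety, since these curves are parabolic, so the functional is constant on $A_{b,t}$. Taking the complex-linear functional whose real part it is, $A_{b,t}$ would then lie in an affine real hyperplane. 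Applying the same Liouville argument to the imaginary part puts $A_{b,t}$ in a complex affine hyperplane, which is a contradiction. Hence $\Co(A_{b,t})=\C^n$.

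\textbf{Step 3: apply parametric convex integration.} I would apply Spring's parametric one-dimensional lemma \cite[Theorem 3.4]{Spring1998}, in the form for variable open targets, to the relation $\dot F(b,t)\in\Omega_{b,t}$. Its hypothesis that the target depends on the base point $(b,t)$ is allowed there, and the extension to compact Hausdorff $B$ goes as in \cite[Remark 3.2]{ForstnericLarusson2019CAG}. This gives a continuous $g:\wt B\to\Omega$, with $g(b,0)=f_0(b,0)$ and $g(b,1)=f_0(b,1)$, such that
\[
	F_0(b,0)+\int_0^t g(b,s)\,ds
\]
is uniformly $\epsilon/2$-close to $F_0$. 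Then I would set $f=\rho\circ g$ and
\[
	F(b,t)=F_0(b,0)+\int_0^t f(b,s)\,ds .
\]
Because $\rho$ is uniformly close to the identity and $t\in[0,1]$, (b) holds, and (a) and (c) hold by construction.

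\textbf{Main obstacle.} I expect the hard step to be making the convex hull input uniform and usable in a parametric family. Spring's theorem needs $f_0(b,t)$ to lie in a path component of the fibre whose convex hull contains $\dot F_0(b,t)$, with continuity in $(b,t)$. One must also ensure that $\Omega$ is open in $Z$ and that $\rho$ is continuous, even though the family $A_{b,t}$ is only continuous. If the tubular construction is awkward, I would first try a local-in-$\wt B$ construction: trivialise $\Acal$ near each $(b_0,t_0)$ by a continuous family of diffeomorphisms, and patch the pieces with a partition of unity before applying $\rho$.
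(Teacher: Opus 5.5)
Your proposal is correct and follows the paper's own route: thicken $\Acal$ to an open set $\Omega\subset Z$ with a fibrewise retraction $\rho:\Omega\to\Acal$, use $\Co(A_{b,t})=\C^n$ to apply Spring's parametric one-dimensional lemma \cite[Theorem 3.4]{Spring1998}, compose with $\rho$, and observe that no correction of $F(b,1)$ is needed. The only difference is that you prove $\Co(A_{b,t})=\C^n$ by a Liouville argument, whereas the paper cites \cite[Lemma 3.5.1]{AlarconForstnericLopez2021}; in your argument, once $\Re\lambda$ is constant on the connected manifold $A_{b,t}$, the holomorphic function $\lambda|_{A_{b,t}}$ is constant by the open mapping theorem, so you do not need a second Liouville step for $\Im\lambda$ (which you have not shown to be bounded).
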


\begin{proof}
By \cite[Lemma 3.5.1]{AlarconForstnericLopez2021}, 
the conditions on the submanifolds $A_{b,t}\subset\C^n$ imply 
$\Co(A_{b,t})=\C^n$ for every $(b,t)\in \wt B$. 
Hence, the lemma is a special case of \cite[Theorem 3.4, p.\ 39]{Spring1998},
except that we must replace the subset $\Acal\subset Z$ by 
a small open neighbourhood $\Omega\subset Z$ with a 
fibre preserving retraction $\Omega\to A$.
The idea of proof is the same as for Lemma \ref{lem:CI1}.
(In this case, we do not need to achieve the exact
conditions $F(b,1)=F_0(b,1)$, $b\in B$.) 
\end{proof}

%
%
\begin{remark}\label{rem:LemmaCI}
In the present paper, Lemma \ref{lem:CI2} will be used for families of 
nonsingular quadric curves 
\[
	A_{b,t}=\{(x,y)\in\C^2: x^2+y^2=c(b,t)\} 
\]
with a continuous function $c:B \times  [0,1] \to\C^*$; 
see Claim \ref{cl:integral}.
\end{remark}

%
%
%
%
\section{Proof of Theorem \ref{th:CMI}}\label{sec:outline}

%
%
\subsection{Preparations}\label{ss:preparations}
For $l\in\N=\{1,2,\ldots\}$, $l\ge 2$, we denote $\Z_l=\Z/l\Z=\{0,1,\ldots,l-1\}$. Let $C\subset X$ be a (closed) Jordan curve. By a {\em division} of $C$ we mean a family of compact connected subarcs $\Dscr=\{\alpha_a: a\in\Z_l\}$ $(l\ge 3)$ of $C$ such that $\bigcup_{a\in\Z_l}\alpha_a=C$, the subarcs $\alpha_a$ and $\alpha_{a+1}$ have a common endpoint and are otherwise disjoint for every $a\in\Z_l$, and $\alpha_a\cap\alpha_e=\varnothing$ for every $a,e\in\Z_l$ with $e\notin\{a-1,a,a+1\}$. Assume now that $\Ccal=\bigcup_{i=1}^kC_i\subset X$ is a union of finitely many pairwise disjoint Jordan curves. By a division of $\Ccal$, we mean a family $\Dscr=\bigcup_{i=1}^k\Dscr_i$ where $\Dscr_i$ is a division of $C_i$ for $i=1,\ldots,k$; here the number of subarcs in $\Dscr_i$ might depend on $i$. Finally, assume that $K\subset X$ is a smoothly bounded compact domain with the boundary components $C_1,\ldots,C_k$, so $bK=\Ccal$. If $\delta>0$ is a number and $v:B\times K\to\R^3$ is a continuous map such that $v_b=v(b,\cdot)=(v_{b,1},v_{b,2},v_{b,3}):K\to\R^3$ is a $J_b$-conformal minimal immersion (on a neighbourhood of $K$) such that 
\[
	\max\{v_{b,1}(p),v_{b,2}(p)\}>\delta\quad 
	\text{for every $p\in bK$ and $b\in B$},
\] 
then we say that a division $\Dscr=\bigcup_{i=1}^k\Dscr_i$ of 
$bK=\bigcup_{i=1}^kC_i$ is {\em compatible with $(v,\delta)$} if the following conditions are satisfied for every $i\in\{1,\ldots,k\}$.
\begin{enumerate}[({$\mathfrak D$}1)]
\item The division $\Dscr_i$ of $C_i$ is of the form $\Dscr_i=\{\alpha_{i,a}: a\in\Z_{l_i}\}$ for an even integer $l_i\ge 4$.
\item $v_{b,1}(p)>\delta$ for every $p\in\alpha_{i,a}$, $a\in\Z_{l_i}$ odd, and $b\in B$.
\item $v_{b,2}(p)>\delta$ for every $p\in\alpha_{i,a}$, $a\in\Z_{l_i}$ even, and $b\in B$.
\item If $a\in \Z_{l_i}$ and $p_{i,a}$ denotes the only point in $\alpha_{i,a}\cap\alpha_{i,a+1}$, then 
$\partial_{J_b} v_{b,1}(p_{i,a})\neq 0$ 
and $\partial_{J_b} v_{b,2}(p_{i,a})\neq 0$ for every $b\in B$.
\end{enumerate}
These conditions imply $\min\{v_{b,1}(p_{i,a}),v_{b,2}(p_{i,a})\}>\delta$ for every $i\in\{1,\ldots,k\}$, $a\in\Z_{l_i}$, and $b\in B$. In general, given $v$ and $\delta$ as above, a division of $bK$ compatible with $(v,\delta)$ 
need not exist, but it always exists when $B$ is a singleton. This is an important extra difficulty with respect to the construction of isolated proper minimal surfaces.

We shall need the following notions.

\begin{definition}[\text{\cite[Definitions 1.12.9 and 3.1.2]{AlarconForstnericLopez2021}}]
\label{def:admissible}
An {\em admissible} set in $X$ is a compact set of the form $S=K\cup E$, where $K$ is a (possibly empty) finite union of pairwise disjoint compact domains with piecewise $\Cscr^1$ boundaries in $X$ and $E = \overline{S\setminus K}$ is a union of finitely many pairwise disjoint smooth Jordan arcs and closed Jordan curves meeting $K$ only at their endpoints (if at all) and such that their intersections with the boundary $bK$ of $K$ are transverse.

Let $J$ be a complex structure on $X$ and $\theta$ be a nowhere vanishing holomorphic $1$-form on $X$. A {\em generalized $J$-conformal minimal immersion} $S\to\R^3$ of class $\Cscr^r$ $(r\in\N)$ is a pair $(u,f\theta)$, where $u:S\to\R^3$ is a $\Cscr^r$ map whose restriction to $\mathring S=\mathring K$ is a $J$-conformal minimal immersion, and $f:S\to \boldA\subset\C^3$ \eqref{eq:nullq} is a map of class $\Cscr^{r-1}$, $J$-holomorphic on $\mathring S=\mathring K$, such that $f\theta=2\di_J u$ everywhere on $K$, and for any smooth path $\beta$ in $X$ parametrizing a component of $E$ we have that $\Re(\beta^*(f\theta))=\beta^*(du)=d(u\circ\beta)$.
Similarly, a {\em generalized $J$-holomorphic null immersion} 
$S\to\C^3$ of class $\Cscr^r$ is a pair $(F,f\theta)$,
where $f$ is as above and $F:S\to\C^n$ is a $\Cscr^r$ map
$(r\in\N)$ satisfying $dF=f\theta$ which is $J$-holomorphic 
on $\mathring S$.
\end{definition}

%
%
\subsection{Outline of proof of Theorem \ref{th:CMI}}
Let $\Fcal_b:H_1(X,\Z)\to\R^3$, $b\in B$, be a continuous family of homomorphisms. We shall construct a map $u=(u_1,u_2,u_3):B\times X\to \R^3$ satisfying the theorem, with $\Flux_{u_b}=\Fcal_b$
and $(u_{b,1},u_{b,2}):X\to\R^2$ being proper
for every $b\in B$, in an inductive process. 
To begin with, choose an exhaustion
\begin{equation}\label{eq:exhaustion}
	X_1\subset X_2\subset\cdots\subset \bigcup_{j\ge 0}X_j=X
\end{equation}
of $X$ by connected, smoothly bounded Runge compact domains such that $X_1$ is a disc, $X_j\subset\mathring X_{j+1}$ for every $j\in \N$, and the Euler characteristic $\chi(X_{j+1}\setminus\mathring X_j)$ of $X_{j+1}\setminus\mathring X_j$ is either $0$ or $-1$ for every $j\in\N$ (such exists by standard topological arguments; see e.g. \cite[Lemma 4.2]{AlarconLopez2013JGEA}). The latter condition simply means that the smoothly bounded compact domain $X_{j+1}\setminus\mathring X_j$ consists of finitely many connected components all of them being annuli, except perhaps one that might be a pair of pants, that is, a sphere with three discs removed. Hence, $X_j$ is a strong deformation retract of $X_{j+1}$ when $\chi(X_{j+1}\setminus\mathring X_j)=0$ (the noncritical case), while the number of boundary components of $X_j$ and $X_{j+1}$ differ by one when $\chi(X_{j+1}\setminus\mathring X_j)=-1$ (the critical case).

We claim that there is a sequence 
$\Xi_j=\{u^j,\Dscr^j,\epsilon_j\}$, $j\in\N$, where
\begin{itemize}
\item $u^j:B\times X_j\to\R^3$ is a continuous map such that $u^j_b=u^j(b,\cdot)=(u^j_{b,1},u^j_{b,2},u^j_{b,3}):X_j\to\R^3$ is a nonflat $J_b$-conformal minimal immersion (on a neighbourhood of $X_j$) with 
\begin{equation}\label{eq:max>j}
	\max\{u^j_{b,1}(p),u^j_{b,2}(p)\}>j\quad 
	\text{for every $p\in bX_j$ and $b\in B$},
\end{equation}
\item $\Dscr^j$ is a division of $bX_j$ compatible with $(u^j,j)$, and
\item $0<\epsilon_j<1$ is a number,
\end{itemize}
such that the following conditions hold for every $j\ge 2$:
\begin{enumerate}[(a$_j$)]
\item $\max\{u^j_{b,1}(p),u^j_{b,2}(p)\}>j-1$ for every 
$p\in X_j\setminus\mathring X_{j-1}$ and $b\in B$.
\item $|u^j_b(p)-u^{j-1}_b(p)|<\epsilon_{j-1}$ for every $p\in X_{j-1}$ and $b\in B$.
\item $\epsilon_j<\epsilon_{j-1}/2$, and if $v:X\to\R^3$ is a $J_b$-conformal harmonic map such that $|v(p)-u^j_b(p)|<2\epsilon_j$ for every $p\in X_j$ and some $b\in B$, then $v$ is a nonflat immersion on $X_j$. 
\item $\Flux_{u^j_b}(C)=\Fcal_b(C)$ for every closed curve $C\subset X_j$ and $b\in B$.
\end{enumerate}
We emphasize that the existence of each division $\Dscr^j$ is established as a part of the induction.

Assume that such a sequence exists. By \eqref{eq:exhaustion}, (b$_j$), and the first part of (c$_j$), there is a limit continuous map
\[
	u=\lim_{j\to\infty} u^j:B\times X\to\R^3
\]
such that $u_b=u(b,\cdot)=(u_{b,1},u_{b,2},u_{b,3}):X\to\R^3$ is a $J_b$-conformal harmonic map with
\begin{equation}\label{eq:<2epsilonj}
	|u_b(p)-u^j_b(p)|<2\epsilon_j\quad \text{for every $p\in X_j$, $j\in\N$, and $b\in B$}.
\end{equation}
The second part of (c$_j$) then guarantees  that $u_b$ is a nonflat $J_b$-conformal minimal immersion, while conditions (d$_j$) ensure that $\Flux_{u_b}=\Fcal_b$ holds for every $b\in B$; take \eqref{eq:exhaustion} into account. Furthermore, given $b\in B$ and $j\ge 2$, we have by (a$_j$), (b$_j$), and \eqref{eq:<2epsilonj} that
\[
	\max\{u_{b,1}(p),u_{b,2}(p)\}>j-1-2\epsilon_j>j-3
	\quad \text{for every $p\in X_{j+1}\setminus\mathring X_j$}.
\]
This and \eqref{eq:exhaustion} imply that $\max\{u_{b,1},u_{b,2}\}:X\to\R$ is a proper map, hence so is $(u_{b,1},u_{b,2}):X\to\R^2$. Therefore, the map $u$ satisfies the conclusion of the theorem. 

To complete the proof, it remains to 
construct a sequence $\Xi_j$ with the desired properties.

%
%
\subsection{The induction}\label{ss:induction}
Set $\epsilon_0=1$ and $X_0=\varnothing$. For the base case when $j=1$, choose any nonflat $\Jst$-conformal minimal immersion $v=(v_1,v_2,v_3):\bar\D\to\R^3$ such that 
\begin{equation}\label{eq:basecase}
	\text{$v_i(p)>1$ and $\partial_{\Jst} v_i(p)\neq 0$  for every  $p\in\bar\D$ and $i\in\{1,2\}$}. 
\end{equation}
Since $X_1$ is a disc, Proposition \ref{prop:CMIdisc} furnishes a continuous map $u^1:B\times X_1\to\R^3$ such that 
$u^1_b=u^1(b,\cdot)=(u^1_{b,1},u^1_{b,2},u^1_{b,3}):X_1\to\R^3$ is a nonflat $J_b$-conformal minimal immersion with $u^1_b(X_1)=v(\bar\D)$ for every $b\in B$. It follows from \eqref{eq:basecase} that condition (a$_1$) and inequality \eqref{eq:max>j} for $j=1$ are satisfied, while any division $\Dscr^1$ of the Jordan curve $bX_1$ consisting of (for instance) $4$ subarcs is compatible with $(u^1,1)$; see conditions ({$\mathfrak D$}1) to ({$\mathfrak D$}4) in Subsection \ref{ss:preparations}. Furthermore, (d$_1$) holds true since $X_1$ is simply connected. Finally, by compactness of $B$, the Cauchy estimates provide a number $\epsilon_1>0$ satisfying condition (c$_1$), whereas condition (b$_1$) is void.

For the inductive step, fix an integer $j\ge 2$ and assume that we already have a triple $\Xi_{j-1}=\{u^{j-1},\Dscr^{j-1},\epsilon_{j-1}\}$ as above, satisfying condition (d$_{j-1}$) and inequality \eqref{eq:max>j} with $j-1$ in place of $j$. 
We shall construct a suitable triple $\Xi_j=\{u^j,\Dscr^j,\epsilon_j\}$ satisfying conditions (a$_j$) to (d$_j$) as well as inequality \eqref{eq:max>j}. For this, we distinguish cases depending on the Euler characteristic of $X_j\setminus \mathring X_{j-1}$.

%
%
\medskip
\noindent{\bf The noncritical case.} Assume that $\chi(X_j\setminus\mathring X_{j-1})=0$, so $X_{j-1}$ is a strong deformation retract of $X_j$. For simplicity of exposition, we assume that $bX_j$ is connected, hence $bX_{j-1}$ is connected as well and $X_j\setminus \mathring X_{j-1}$ is a smoothly bounded compact annulus. For the general case, we apply the same procedure in each component of $X_j\setminus \mathring X_{j-1}$.

Write $\Dscr^{j-1}=\{\alpha_a: a\in\Z_l\}$, $l\ge 4$ even, for the given division of the closed Jordan curve $bX_{j-1}$ compatible with $(u^{j-1},j-1)$, and denote by $p_a$ the only point in $\alpha_a\cap\alpha_{a+1}$, $a\in\Z_l$. Set
\begin{equation}\label{eq:Zlodd}
	\Z_l^{\rm odd}=\{a\in\Z_l: a\text{ odd}\}
	\quad\text{and}\quad
	\Z_l^{\rm even}=\{a\in\Z_l: a\text{ even}\}.
\end{equation}
It is obvious that $\Z_l=\Z_l^{\rm odd}\cup\Z_l^{\rm even}$ and $\Z_l^{\rm odd}\cap\Z_l^{\rm even}=\varnothing$. Conditions ({$\mathfrak D$}2) to ({$\mathfrak D$}4) then give the following properties for every $b\in B$.
\begin{enumerate}[({I}$_1$)]
\item $u^{j-1}_{b,1}(p)>j-1$ for every $p\in\alpha_a$, $a\in\Z_l^{\rm odd}$.
\item $u^{j-1}_{b,2}(p)>j-1$ for every $p\in\alpha_a$, $a\in\Z_l^{\rm even}$.
\item $\partial_{J_b} u^{j-1}_{b,1}(p_a)\neq 0$ and $\partial_{J_b} u^{j-1}_{b,2}(p_a)\neq 0$ for every $a\in\Z_l$.
\end{enumerate}
In particular, 
\begin{equation}\label{eq:min}
	\min\{u^{j-1}_{b,1}(p_a),u^{j-1}_{b,2}(p_a)\}>j-1\quad \text{for every $a\in\Z_l$, $b\in B$.} 
\end{equation}
\smallskip
\noindent{\em The first deformation.} In the first step of the construction,  we shall approximate $u^{j-1}$ uniformly on $B\times X_{j-1}$ by a continuous family $\hat u:B\times X_j\to\R^3$ of $J_b$-conformal minimal immersions with some control of their images. For each $a\in\Z_l$ we choose a smooth embedded arc $\gamma_a\subset X_j\setminus\mathring X_{j-1}$ with the initial point $p_a\in bX_{j-1}$, the final point $q_a\in bX_j$, and otherwise disjoint from $bX_j\cup bX_{j-1}$. We choose the family of arcs $\gamma_a$, $a\in\Z_l$, to be pairwise disjoint and such that the set 
\[
	S=X_{j-1}\cup\bigcup_{a\in\Z_l}\gamma_a
\]  
is admissible in the sense of Definition \ref{def:admissible}. Also, for each $a\in \Z_l$ we denote by $\beta_a$ the arc in $bX_j$ connecting $q_{a-1}$ and $q_a$ without meeting any of the other points $q_e$ for $e\in\Z_l\setminus\{a-1,a\}$. We denote by $D_a\subset X_j\setminus\mathring X_{j-1}$ the closed disc bounded by $\gamma_{a-1}$, $\alpha_a$, $\gamma_a$, and $\beta_a$, $a\in\Z_l$.
(See Figure \ref{fig:Da}.) Note that $D_a\cap D_{a+1}=\gamma_a$, $D_a\cap D_e=\varnothing$ for $e\in\Z_l\setminus\{a-1,a,a+1\}$,
\begin{equation}\label{eq:cupDa}
	X_j\setminus\mathring X_{j-1}=\bigcup_{a\in\Z_l}D_a =\bigcup_{a\in\Z_l^{\rm odd}}D_a\cup \bigcup_{a\in\Z_l^{\rm even}}D_a,
\end{equation}
and
\begin{equation}\label{eq:cupbetaa}
	bX_j=\bigcup_{a\in\Z_l}\beta_a =\bigcup_{a\in\Z_l^{\rm odd}}\beta_a\cup \bigcup_{a\in\Z_l^{\rm even}}\beta_a.
\end{equation}
%
%
\begin{figure}[htpb]
    \centering
    \begin{tikzpicture}[scale=1, every node/.style={scale=1}]
        
        \draw (0,4) .. controls (3,5) and (9,5) .. (12,4)
          node[pos=0.08, above] {$\beta_{a-1}$}
          node[pos=0.25, circle, fill, inner sep=1.5pt] (qa1) {}
          node[pos=0.25, above left] {$q_{a-1}$}
          node[pos=0.3, above, align=center] {$\beta_a^{a-1}$ \\[-0.5ex] $\downarrow$}
          node[pos=0.35, circle, fill, inner sep=1.5pt] (qaa1) {}
          node[pos=0.35, above right] {$q_a^{a-1}$}
          node[pos=0.5, above] {$\beta_a$}
          node[pos=0.65, circle, fill, inner sep=1.5pt] (qaa) {}
          node[pos=0.65, above left] {$q_a^a$}
          node[pos=0.72, above, align=center] {$\beta_a^a$ \\[-0.5ex] $\downarrow$}
          node[pos=0.78, circle, fill, inner sep=1.5pt] (qa) {}
          node[pos=0.78, above right] {$q_a$}
          node[pos=0.95, above] {$\beta_{a+1}$};

        \draw (0.5,1) .. controls (3,2) and (9,2) .. (11.5,1)
          node[pos=0.1, below] {$\alpha_{a-1}$}
          node[pos=0.25, circle, fill, inner sep=1.5pt] (pa1) {}
          node[pos=0.25, below=0.1cm] {$p_{a-1}$}
          node[pos=0.5, below] {$\alpha_a$}
          node[pos=0.72, circle, fill, inner sep=1.5pt] (ppa) {}
          node[pos=0.72, below=0.1cm] {$p'_a$}
          node[pos=0.82, circle, fill, inner sep=1.5pt] (pa) {}
          node[pos=0.82, below=0.1cm] {$p_a$}
          node[pos=0.95, below] {$\alpha_{a+1}$};

        \draw (qa1) -- (pa1) node[midway, left] {$\gamma_{a-1}$};
        \node at ($(qa1)!0.5!(pa1) + (-1.8, 0)$) {$D_{a-1}$};

        \draw (qa) -- (pa) node[midway, right] {$\gamma_a$};
        \node at ($(qa)!0.5!(pa) + (1.2, 0)$) {$D_{a+1}$};

        \coordinate (M) at ($(qaa1) + (0.4, -2.0)$); 
        
        \coordinate (qpa) at ($(qaa) + (-0.4, -2.1)$); 

        \draw (qaa1) -- (M);
        \draw (qaa) -- (qpa);

        \draw (M) .. controls ($(M) + (1, 0.1)$) and ($(qpa) + (-1, 0.1)$) .. (qpa) 
          node[midway, below=0.1cm] {$D_a$};

        \node[circle, fill, inner sep=1.5pt] at (qpa) {};
        \node[below left] at (qpa) {$q'_a$};

        \draw (qpa) to[bend right=15] node[midway, right=1mm] {$\xi_a$} (ppa);

        \node at (5.8, 3.6) {$\Omega_a$};

        \draw plot [smooth cycle, tension=0.8] coordinates { 
            (6.5, 2.7) 
            (8.3, 3.2) 
            (10.5, 2.2) 
            (10.0, 0.6) 
            (7.4, 0.8) 
        };
        \node at (8.0, 0.9) {$\Delta_a$};

    \end{tikzpicture}
    
    \caption{Configuration of sets in $X_j\setminus\mathring X_{j-1}$.}
    \label{fig:Da}
    
\end{figure}
%
%

Consider the $\Jscr$-holomorphic map $f:B\times X_{j-1}\to\bold A$ to 
the punctured null quadric \eqref{eq:nullq} given by 
\[
	f(b,\cdot)=2\partial_{J_b} u_b^{j-1}/\theta_b\quad
	\text{for every $b\in B$}, 
\]
and set $f_b=f(b,\cdot)=(f_{b,1},f_{b,2},f_{b,3}):X_{j-1}\to\boldA$.

%
%

The following claim is an immediate consequence of Lemma \ref{lem:CI1};
for granting the second condition, take into account \eqref{eq:min}.

\begin{claim}\label{cl:arcs}
The pair of maps $u^{j-1}:B\times X_{j-1}\to\R^3$ and $f:B\times X_{j-1}\to\boldA$ can be extended to continuous maps $u^{j-1}:B\times S\to\R^3$ and $f:B\times S\to\boldA$, respectively, such that, setting $u^{j-1}_b=u^{j-1}(b,\cdot)=(u^{j-1}_{b,1},u^{j-1}_{b,2},u^{j-1}_{b,3})$ and $f_b=f(b,\cdot)=(f_{b,1},f_{b,2},f_{b,3})$ for every $b\in B$, the following conditions are satisfied.
\begin{itemize}
\item The pair $(u^{j-1}_b,f_b\theta_b)$ is a generalized $J_b$-conformal minimal immersion $S\to\R^3$ of class $\Cscr^1$ for every $b\in B$ (see Definition \ref{def:admissible}).
\item $\min\{u^{j-1}_{b,1}(p),u^{j-1}_{b,2}(p)\}>j-1$ for every $p\in \bigcup_{a\in\Z_l}\gamma_a$ and $b\in B$.
\item $\min\{u^{j-1}_{b,1}(q_a),u^{j-1}_{b,2}(q_a)\}>j$ for every $a\in\Z_l$ and $b\in B$.
\item $f_{b,1}(q_a)f_{b,2}(q_a)\neq0$ for every  $a\in\Z_l$ and $b\in B$.
\end{itemize}
\end{claim}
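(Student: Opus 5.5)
We treat each arc $\gamma_a$ separately, since the arcs are pairwise disjoint and meet $X_{j-1}$ only at $p_a$. Fix $a\in\Z_l$ and a smooth parametrisation $\gamma_a(t)$, $t\in[0,1]$, with $\gamma_a(0)=p_a$ and $\gamma_a(1)=q_a$. Writing $\theta_b(\dot\gamma_a(t))=\sigma_b(t)\in\C^*$, which is continuous in $(b,t)$, the conditions in Definition \ref{def:admissible} along $\gamma_a$ say the following: for $G_b(t)$ a $\C^3$-valued curve with $\dot G_b(t)=f_b(\gamma_a(t))\sigma_b(t)$, we have $u^{j-1}_b(\gamma_a(t))=\Re G_b(t)$. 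Since $\boldA$ is invariant under multiplication by $\C^*$, prescribing $\dot G_b(t)\in\boldA$ is equivalent to prescribing $f_b(\gamma_a(t))\in\boldA$. So it suffices to construct a continuous family of $\Cscr^1$ curves $G:B\times[0,1]\to\C^3$ with the following properties.
\begin{enumerate}[\rm (i)]
\item $\dot G(b,t)\in\boldA$ for all $(b,t)$.
\item $G(b,0)=u^{j-1}_b(p_a)+\imath c_b$ for some continuous $c_b$, and $\dot G(b,0)=f_b(p_a)\sigma_b(0)$.
\item $\Re G_1,\Re G_2>j-1$ on $[0,1]$ and $>j$ at $t=1$.
\item $\dot G_1(b,1)\dot G_2(b,1)\ne0$.
\end{enumerate}
Condition (ii) gives $\Cscr^1$ matching with $u^{j-1}_b$ and $f_b$ at $p_a$. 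One then defines $f_b$ on $\gamma_a$ by $f_b=\dot G_b/\sigma_b$ and $u^{j-1}_b=\Re G_b$.

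To build $G$, we first choose a model curve $F_0(b,t)$ and then apply Lemma \ref{lem:CI1}. Fix a constant vector $w\in\boldA$ with $w_1w_2\ne0$, for instance $w=(1,1,\imath\sqrt2)$. Define $F_0(b,t)$ as a $\Cscr^1$ curve with $F_0(b,0)=u^{j-1}_b(p_a)$ and $\dot F_0(b,0)=f_b(p_a)\sigma_b(0)$ that moves in a straight line so that $\Re F_{0,1}$ and $\Re F_{0,2}$ increase from values $>j-1$ (using \eqref{eq:min} and compactness of $B$, we get a uniform margin $\eta>0$) to values $>j+1$ at $t=1$, with $\dot F_0(b,1)=w$. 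Concretely, take $F_0(b,t)=F_0(b,0)+tM(1+\imath)(1,1,0)$ corrected by cutoff terms near $t=0$ and $t=1$ that adjust the derivatives. These correction terms can be made of arbitrarily small $\Cscr^0$ size by shrinking the cutoff intervals, and $M$ is chosen large. We also set $f_0(b,0)=\dot F_0(b,0)$ and $f_0(b,1)=w$; these are continuous and lie in $\boldA$, so the boundary hypotheses of Lemma \ref{lem:CI1} hold. Then $\Re F_{0,i}(b,t)>j-1+\eta/2$ for $i=1,2$ and all $t$, and $\Re F_{0,i}(b,1)>j+1$.

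Applying Lemma \ref{lem:CI1} with $\epsilon<\min\{\eta/2,1\}$ yields $F$ with $\dot F\in\boldA$, equal endpoint values and derivatives, and $\|F-F_0\|_\infty<\epsilon$. Setting $G=F$ gives (i)--(iv): (iii) follows from the uniform estimate, and (iv) from $\dot F(b,1)=w$.

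The only delicate point is the construction of $F_0$ with prescribed derivatives at both ends while keeping the real parts above $j-1$ uniformly in $b$. This is handled by the uniform margin $\eta$ coming from compactness of $B$, together with short cutoff intervals. We also need $f_b$ to be $\Cscr^0$ on $S$ and $u^{j-1}_b$ to be $\Cscr^1$, which the matching of first derivatives at $p_a$ ensures. Nothing about division compatibility enters here beyond \eqref{eq:min}.
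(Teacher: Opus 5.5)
Your route is the paper's: apply Lemma \ref{lem:CI1} along each arc $\gamma_a$, with \eqref{eq:min} and compactness of $B$ giving the lower bound on $\gamma_a$. Your reduction to curves $G$ with $\dot G\in\boldA$, the model curve $F_0$ with uniform margin $\eta$, and the cutoff corrections are all fine.

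The step that fails is the choice of a $b$-independent endpoint derivative $\dot F_0(b,1)=w$. Lemma \ref{lem:CI1} needs a continuous map $f_0\colon B\times[0,1]\to\boldA$, not just its values at $t=0,1$. That is, it needs a homotopy in $\boldA$ from $b\mapsto f_b(p_a)\sigma_b(0)=2\di_{J_b}u^{j-1}_b(p_a)(\dot\gamma_a(0))$ to the constant map $w$. This is not an artefact of the lemma: any $G$ with your properties (i), (ii) and $\dot G(b,1)=w$ would itself be such a homotopy, via $(b,t)\mapsto\dot G(b,t)$.

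Such a homotopy need not exist. Writing $z=x+\imath y$, we have $\boldA=\{x+\imath y: x,y\in\R^3,\ |x|=|y|>0,\ x\cdot y=0\}$. This retracts onto the orthonormal $2$-frames in $\R^3$, i.e.\ onto $SO(3)$, so $\pi_1(\boldA)=\Z_2$. For $B=\R/\Z$, take the data $f_b(p_a)\sigma_b(0)=e^{2\pi\imath b}(1,\imath,0)$, a full rotation in the $x_1x_2$-plane. It is not null-homotopic in $\boldA$, yet it satisfies (I$_3$), and nothing in the inductive hypotheses excludes it (cf.\ Remark \ref{rem:hp}(a)).

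The repair is simple. Take $w_b:=f_b(p_a)\sigma_b(0)$ and $f_0(b,t)\equiv w_b$, and let the cutoff near $t=1$ adjust the derivative to $w_b$; this is still uniformly bounded by compactness of $B$. Then $f_b(q_a)=w_b/\sigma_b(1)$, and the fourth bullet follows from (I$_3$), i.e.\ from ($\mathfrak D$4), which gives $f_{b,1}(p_a)f_{b,2}(p_a)\neq 0$.

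So your closing remark that nothing about division compatibility enters beyond \eqref{eq:min} is wrong. ($\mathfrak D$4) is exactly what makes the fourth bullet attainable for a general compact $B$. Without it, a family $B\to\boldA$ need not even be homotopic in $\boldA$ into $\{z\in\boldA: z_1z_2\neq 0\}\cong\C^*\times(\C\setminus\{0,\pm1\})$. That set is aspherical, whereas $\pi_3(\boldA)=\pi_3(SO(3))=\Z$.
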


By the Mergelyan theorem for families of conformal minimal immersions  (see \cite[Theorem 8.2 and Corollary 8.6]{Forstneric2024Runge}), we can approximate $u^{j-1}$ uniformly on $B\times S$ by a continuous map $\hat u:B\times X_j\to\R^3$ satisfying the following conditions 
for every $b\in B$.
\begin{enumerate}[\rm ({II}$_1$)]
\item The map $\hat u_b=\hat u(b,\cdot)=(\hat u_{b,1},\hat u_{b,2},\hat u_{b,3}):X_j\to\R^3$ is a $J_b$-conformal minimal immersion.
\item $|\hat u_b(p)-u^{j-1}_b(p)|<\epsilon_{j-1}/3$ for every $p\in X_{j-1}$.
\item $\min\{\hat u_{b,1}(p),\hat u_{b,2}(p)\}>j-1$ for every $p\in \bigcup_{a\in\Z_l}\gamma_a$.
\item $\min\{\hat u_{b,1}(q_a),\hat u_{b,2}(q_a)\}>j$ for every $a\in\Z_l$.
\item $\hat f_{b,1}(q_a)\hat f_{b,2}(q_a)\neq 0$ for every  $a\in\Z_l$, where 
\[
	\hat f_b=(\hat f_{b,1},\hat f_{b,2},\hat f_{b,3})
	=2\partial_{J_b}\hat u_b/\theta_b:X_j\to\boldA.
\]
\item $\Flux_{\hat u_b}(C)=\Fcal_b(C)$ for every closed curve 
$C\subset X_j$; take into account (d$_{j-1}$).
\item $\hat f_{b,1}(p_a)\hat f_{b,2}(p_a)\neq 0$ for every $a\in\Z_l$; see condition (I$_3$).
\end{enumerate}

Taking a look into the conditions required in the induction, (II$_3$) and (II$_4$) show that the immersions $\hat u_b$ assume suitable values on $\bigcup_{a\in\Z_l}\gamma_a$ (see inequality \eqref{eq:max>j} and condition (a$_j$)), but they need not do so on the complement of $\bigcup_{a\in\Z_l}\gamma_a$ in $X_j\setminus X_{j-1}$, so we have to keep working. 

\smallskip
\noindent{\em The second deformation I: a spray of L\'opez--Ros transformations.} We shall now deform the family $\hat u_b$ on $X_j\setminus X_{j-1}$ in order to obtain more control on its image over the sets $D_a$ for $a\in\Z_l^{\rm odd}$; see \eqref{eq:cupDa}.

Consider the $\Jscr$-holomorphic map $\hat f:B\times X_j\to\boldA$ given by $\hat f(b,\cdot)=\hat f_b:X_j\to\boldA$ for every $b\in B$. Our next task is to embed $\hat f$ as the core of a period dominating spray of maps $B\times X_j\to\boldA$, keeping fixed the first component function; this will enable us to kill the periods in a subsequent step. In the nonparametric case, this has typically been done using the Oka principle for sections of ramified holomorphic maps with Oka fibres (see \cite[Section 6.14]{Forstneric2017E}); a tool that is not available in the parametric framework. Instead, we shall use a parametric version of the L\'opez-Ros deformation for minimal surfaces (see \cite{LopezRos1991JDG}) as in \cite{AlarconLarusson2025JGA}. Set
\begin{equation}\label{eq:psi_b}
	\psi_b=\frac{\hat f_{b,1}}{\hat f_{b,2}-\imath\hat f_{b,3}}:
	X_j\to\CP^1,\quad b\in B.
\end{equation}
A calculation shows that 
\begin{equation}\label{eq:WFf1}
	\hat f_{b,2}=
	\frac12\Big(\frac1{\psi_b}-\psi_b\Big)\hat f_{b,1}
	\quad\text{and}\quad
	\hat f_{b,3}=
	\frac{\imath}2\Big(\frac1{\psi_b}+\psi_b\Big)\hat f_{b,1},\quad b\in B.
\end{equation}
(See \eqref{eq:null3}.) Moreover,
\begin{equation}\label{eq:dp}
	\frac{\hat f_{b,1}}{\psi_b}=\hat f_{b,2}-\imath\hat f_{b,3}
	\quad\text{and}\quad
	\psi_b\hat f_{b,1}=-\hat f_{b,2}-\imath\hat f_{b,3}
\end{equation}
are $J_b$-holomorphic functions on $X_j$ for every $b\in B$.
Note that for any $\Jscr$-holomorphic map 
$(\hat f_1,\tilde f_2,\tilde f_3):B\times X_j\to \boldA$
sharing the first component $\hat f_1$ with the map $\hat f$, 
there is a $\Jscr$-holomorphic function $\mu:B\times X_j\to\C^*$
such that 
\begin{equation}\label{eq:WFtildef1}
	\tilde f_{b,2}=
	\frac12\Big(\frac1{\mu_b\psi_b}-\mu_b\psi_b\Big)\hat f_{b,1},
	\quad 
	\tilde f_{b,3}=
	\frac{\imath}2\Big(\frac1{\mu_b\psi_b}+\mu_b\psi_b\Big)\hat f_{b,1},
	\quad b\in B.
\end{equation}

Fix a point $x_0\in\mathring X_{j-1}$ and let $\Gamma_1,\ldots,\Gamma_m$ be a family of smooth oriented Jordan curves in $\mathring X_{j-1}$ such that any two of them only intersect at $x_0$, they form a basis of the homology group $H_1(X_{j-1},\Z)$ (and hence also of $H_1(X_j,\Z)$), and the set $\Gamma=\bigcup_{i=1}^m \Gamma_i$ is Runge in $X$ \cite[Lemma 1.12.10]{AlarconForstnericLopez2021}. Denote by $\Pcal:B\times\Cscr(\Gamma,\C^*)\to(\C^2)^m$ the period map that sends for each $b\in B$ a map $h\in\Cscr(\Gamma,\C^*)$ to
\begin{equation}\label{eq:period-map}
	\Pcal(b,h)=\left( \int_{\Gamma_i} \Big(\frac{\hat f_{b,1}}{h\psi_b}
	\,,\,
	h\psi_b\hat f_{b,1}\Big)\theta_b\right)_{i=1,\ldots,m}\in (\C^2)^m.
\end{equation}
Since $\hat f_b$ is nonflat for each $b\in B$, the functions $\hat f_{b,1}/\psi_b$ and $\psi_b\hat f_{b,1}$ in \eqref{eq:dp} are complex linearly independent. We can therefore use \cite[Claim 2.3]{AlarconLarusson2025JGA} to obtain a continuous function
\[
	\sigma:\B\times B\times X_j\to\C^*,
\]
depending holomorphically on a parameter $\zeta=(\zeta_1,\ldots,\zeta_N)$ in a ball $\B\subset \C^N$ centred at the origin for some large $N\in\N$ and satisfying the following conditions
for every $b\in B$.
\begin{enumerate}[\rm ({III}$_1$)]
\item The function $\sigma(\zeta,b,\cdot):X_j\to\C^*$ is $J_b$-holomorphic for every $\zeta\in\B$.
\item $\sigma(0,b,p)=1$ for every $p\in X_j$.
\item $\sigma$ is period dominating, in the sense that the map
\[
	\B\ni\zeta\longmapsto 
	\Pcal\big(b,\sigma(\zeta,b,\cdot)\big)\in (\C^2)^m
\]
is submersive at $\zeta=0$; see \eqref{eq:period-map}.
\end{enumerate}

\begin{remark}
Note that \cite[Claim 2.3]{AlarconLarusson2025JGA} provides such a function in the special case when $J_b$ does not depend on $b\in B$, so it deals with a single open Riemann surface instead of with a family. Nevertheless, using also the argument in 
\cite[proof of Theorem 8.2, the noncritical case]{Forstneric2024Runge},
the proof in \cite{AlarconLarusson2025JGA} extends in a straightforward 
way to a variable family of complex structures on $X$. 
\end{remark}

Set $\sigma_{\zeta,b}=\sigma(\zeta,b,\cdot)$ for $\zeta\in\B$ and $b\in B$, and let $\varphi:\B\times B\times X_j\to\boldA$ be the continuous map 
\begin{equation}\label{eq:varphi}
	\varphi(\zeta,b,\cdot)=\left( \hat f_{b,1}\,,\, 
	\frac12\Big(\frac1{\sigma_{\zeta,b}\psi_b}-\sigma_{\zeta,b}\psi_b\Big)\hat f_{b,1}
	\,,\,
	\frac{\imath}2\Big(\frac1{\sigma_{\zeta,b}\psi_b}+\sigma_{\zeta,b}\psi_b\Big)\hat f_{b,1}\right),\quad (\zeta,b)\in\B\times B.
\end{equation}
It follows that $\varphi$ is well defined (that is, it takes its values in $\boldA$), and for every $\zeta\in\B$ and $b\in B$ the map $\varphi(\zeta,b,\cdot):X_j\to\boldA$ is $J_b$-holomorphic and satisfies $\varphi(0,b,\cdot)=\hat f_b$ .

\smallskip
\noindent{\em The second deformation II: replacement of the core.}  We shall now replace the functions $\psi_b$ in \eqref{eq:varphi} by suitable functions that are obtained by another application of the L\'opez--Ros transformation with parameters. The aim is to create a strong but controlled deformation of $\hat u_b$ on the sets $D_a$ for $a\in\Z_l^{\rm odd}$.

By (II$_7$) and compactness of $B$, for each $a\in\Z_l^{\rm odd}$ there is  an open, smoothly bounded disc neighborhood $\Delta_a$ of $p_a$ in $X_j$, with $\overline\Delta_a\cap (bX_j\cup\bigcup_{e\in\Z_l\setminus\{a\}}\gamma_e)=\varnothing$, such that
\begin{equation}\label{eq:Deltaa}
	\hat f_{b,1}(p)\hat f_{b,2}(p)\neq 0\quad
	\text{for every $p\in \overline\Delta_a$ and $b\in B$}.
\end{equation}
Further, by (II$_3$), (II$_4$), (II$_5$), and compactness of $B$, for each $a\in \Z_l^{\rm odd}$ there is a closed disc $\Omega_a\subset D_a\setminus(\gamma_{a-1}\cup\alpha_a\cup\gamma_a)$ such that the following conditions hold.
\begin{enumerate}[\rm ({IV}$_1$)]
\item $\Omega_a\cap\beta_a$ is a compact connected Jordan arc.
\item $\hat u_{b,1}(p)>j-1$ for every $p\in \overline{D_a\setminus\Omega_a}$ and $b\in B$.
\item $\hat u_{b,1}(p)>j$ for every $p\in \overline{\beta_a\setminus\Omega_a}$ and $b\in B$.
\item $\Delta_a\cap b\Omega_a\neq\varnothing$
and $D_a\cap (\Delta_a\setminus \mathring \Omega_a)$ is path connected. (The latter condition might require to replace $\Delta_a$ by a smaller neighbourhood of $p_a$.)
\item $\hat f_{b,1}(p)\hat f_{b,2}(p)\neq 0$ for every $p\in \overline{\beta_a\setminus \Omega_a}$ and $b\in B$.
\end{enumerate}
Denote by $\beta_a^{a-1}$ the connected component of $\overline{\beta_a\setminus\Omega_a}$ with $q_{a-1}$ as an endpoint, and $\beta_a^a$ the one containing $q_a$. Denote by $q_a^{a-1}$ and $q_a^a$ the other endpoint of $\beta_a^{a-1}$ and $\beta_a^a$, respectively. Clearly,
\begin{equation}\label{eq:betaa}
	\overline{\beta_a\setminus\Omega_a}=\beta_a^{a-1}\cup\beta_a^a.
\end{equation}
In view of (IV$_4$), for each $a\in \Z_l^{\rm odd}$ there is a smooth embedded arc $\xi_a\subset (D_a\setminus\mathring\Omega_a)\cap\Delta_a$ with initial point $p_a'\in \alpha_a\setminus\{p_a\}$ and endpoint $q_a'\in b\Omega_a$, and otherwise disjoint from 
$bD_a\cup b\Omega_a$. Choose these arcs so that the Runge compact set
\[
	S^{\rm odd}=X_{j-1}\cup \Big(\bigcup_{a\in\Z_l^{\rm even}}D_a \Big)\cup
	\Big(\bigcup_{a\in\Z_l^{\rm odd}}\Omega_a\cup\xi_a\Big)\subset X_j
\]
is admissible (Definition \ref{def:admissible}); see Figure \ref{fig:Da}.
Condition \eqref{eq:Deltaa} ensures that 
\begin{equation}\label{eq:Deltaa2}
	\hat f_{b,1}(p) \neq 0\quad
	\text{for every $b\in B$, $p\in \xi_a$, and $a\in\Z_l^{\rm odd}$}.
\end{equation}

%
%

\begin{claim}\label{cl:integral}
For any number $\tau>0$ there is a continuous function $\mu:B\times S^{\rm odd}\to\C^*$ such that, setting $\mu_b=\mu(b,\cdot)$, we have that $\mu_b(p)=1$ for every 
$p\in S^{\rm odd}\setminus\bigcup_{a\in\Z_l^{\rm odd}}\xi_a$ and
\[
	\Re\int_{\xi_a}\Big(\frac1{\mu_b\psi_b}-\mu_b\psi_b\Big)\hat f_{b,1}\theta_b>\tau
	\quad\text{for every $a\in\Z_l^{\rm odd}$ and $b\in B$}.
\]
\end{claim}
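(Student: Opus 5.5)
The plan is to work on each arc $\xi_a$, $a\in\Z_l^{\rm odd}$, separately and reduce the claim to the parametric convex integration Lemma \ref{lem:CI2}, applied to the family of quadric curves in Remark \ref{rem:LemmaCI}. Fix $a$ and a smooth regular parametrisation $\xi_a:[0,1]\to X_j$ with $\xi_a(0)=p'_a$ and $\xi_a(1)=q'_a$. Write $\theta_b(\dot\xi_a(t))\in\C^*$ for the value of $\theta_b$ on the velocity vector, and set
\[
	x_b(t)=\frac{\hat f_{b,1}}{\psi_b}(\xi_a(t))\,\theta_b(\dot\xi_a(t)),\qquad
	y_b(t)=\psi_b\hat f_{b,1}(\xi_a(t))\,\theta_b(\dot\xi_a(t)).
\]
By \eqref{eq:dp} these are the continuous functions $(\hat f_{b,2}\mp\imath\hat f_{b,3})$ multiplied by $\pm\theta_b(\dot\xi_a)$, so poles or zeros of $\psi_b$ cause no problem. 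They satisfy $x_by_b=c(b,t):=\hat f_{b,1}(\xi_a(t))^2\theta_b(\dot\xi_a(t))^2$, and $c(b,t)\neq0$ on $B\times[0,1]$ by \eqref{eq:Deltaa2}.

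Replacing $\psi_b$ by $\mu_b\psi_b$ along $\xi_a$ replaces $(x,y)$ by $(x/\mu,\mu y)$, which is again a point of the curve $\{xy=c(b,t)\}$. Conversely, every continuous curve $(x(b,t),y(b,t))$ with $xy=c$ determines $\mu=y/y_b=x_b/x$, which is continuous and $\C^*$-valued, because $x$, $y$, $x_b$, $y_b$ all have nonvanishing product with their partners. In the coordinates $X=x-y$, $Y=\imath(x+y)$ the curve becomes $A_{b,t}=\{X^2+Y^2=-4c(b,t)\}$. This is a connected, smooth, closed algebraic curve and is not contained in any affine line, exactly as in Remark \ref{rem:LemmaCI}. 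The quantity we must make large is $\Re\int_0^1 X\,dt$.

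Let $f_0(b,t)=(x_b-y_b,\imath(x_b+y_b))(t)\in A_{b,t}$; this corresponds to $\mu\equiv1$. Choose a continuous $F_0:B\times[0,1]\to\C^2$, $\Cscr^1$ in $t$ with continuous $t$-derivative, such that $F_0(b,0)=0$, $\dot F_0(b,i)=f_0(b,i)$ for $i=0,1$, and $\Re F_{0,1}(b,1)>\tau+1$ for all $b\in B$. One choice is $F_0(b,t)=\int_0^t f_0(b,s)\,ds+\chi(t)\,(M,0)$, where $\chi$ is a smooth function with $\chi(0)=0$, $\chi(1)=1$ and $\chi'=0$ near $t=0,1$. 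Here $M$ is a large real constant chosen using compactness of $B$ to bound $\int f_0$.

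Apply Lemma \ref{lem:CI2} with $\epsilon<1$. It gives $F$ with $\dot F(b,t)\in A_{b,t}$, with $\dot F=f_0$ at $t=0,1$, and with $|F-F_0|<1$. Hence $\Re\int_0^1\dot F_1\,dt=\Re F_1(b,1)>\tau$. Converting $\dot F$ back to $(x,y)$ and setting $\mu_b(\xi_a(t))=y(b,t)/y_b(t)$ gives a continuous $\C^*$-valued function on $B\times\xi_a$. At the endpoints $\dot F=f_0$, so $\mu_b=1$ at $p'_a$ and $q'_a$. We therefore extend $\mu$ by $1$ to the rest of $S^{\rm odd}$; the extension is continuous because $\xi_a$ meets $X_{j-1}\cup\Omega_a$ only at its endpoints. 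The integral in the claim equals $\Re\int_0^1\dot F_1\,dt>\tau$.

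The only delicate points are bookkeeping. First, the hypotheses of Lemma \ref{lem:CI2} must hold: $c$ never vanishes, which is exactly \eqref{eq:Deltaa2}, and $c$ and $f_0$ depend continuously on $(b,t)$. Second, $\mu$ must equal $1$ exactly at the endpoints so that it glues continuously, and this is guaranteed by condition (a) of the lemma.
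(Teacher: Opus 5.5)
Your proposal is correct and takes essentially the same route as the paper. On each arc $\xi_a$ you apply Lemma \ref{lem:CI2} to a family of nonsingular conics: your $\{X^2+Y^2=-4c(b,t)\}$ is the paper's $\{x^2+y^2=-\hat f_{b,1}^2\}$ in the variables $(\tilde f_{b,2},\tilde f_{b,3})$, rescaled by $2\theta_b(\dot\xi_a(t))$. You then recover $\mu_b$ from \eqref{eq:WFtildef1} and extend it by $1$. Your write-up also supplies two details the paper leaves implicit: absorbing $\theta_b(\dot\xi_a(t))$ into the curve family, and choosing $F_0$ with $\Re F_{0,1}(b,1)>\tau+1$ so that the $\epsilon$-closeness in Lemma \ref{lem:CI2} forces the integral to exceed $\tau$.
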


This follows from Lemma \ref{lem:CI2},
applied to the family of maps $(\hat f_{b,2},\hat f_{b,3}):\xi_a \to \C^2$
$(a\in\Z_l^{\rm odd},\ b\in B)$, together with the observation in \eqref{eq:WFtildef1}. Indeed, let $t\in [0,1]$ be a regular parameter on 
the arc $\xi_a$ for some $a\in\Z_l^{\rm odd}$. Since $\hat f_{b,1}(t)\ne 0$ 
for all $b\in B$ and $t\in [0,1]$ \eqref{eq:Deltaa2}, the equation 
$x^2+y^2=-\hat f_{b,1}(t)^2$
defines a noningular quadric curve $A_{b,t}\subset \C^2$,
and it remains to apply Lemma \ref{lem:CI2} to families of maps 
$[0,1]\ni t\mapsto (\tilde f_{b,2}(t),\tilde f_{b,3}(t)) \in A_{b,t}$ 
with $\tilde f_{b,i}(0)=\hat f_{b,i}(0)$ and $\tilde f_{b,i}(1)=\hat f_{b,i}(1)$ 
for $b\in B$ and $i=2,3$. The observation \eqref{eq:WFtildef1}
gives for each arc $\xi_a$, with $a\in\Z_l^{\rm odd}$, 
a continuous family of multipliers $\mu_{b,a}:\xi_a \to\C^*$
$(b\in B)$ assuming the value $1$ at the endpoints of 
$\xi_a$. Hence, we can extend these functions to 
$\mu_b: S^{\rm odd}\to\C^*$ taking the value $1$ 
on the complement of $\bigcup_{a\in\Z_l^{\rm odd}}\xi_a$.

Consider a function $\mu$ given by the claim for a fixed large number $\tau>0$ to be specified later. Since $\C^*$ is Oka, the Mergelyan theorem for families of holomorphic functions into Oka manifolds in \cite[Theorem 1.6]{Forstneric2024Runge} furnishes a continuous function $\hat \mu:B\times X_j\to\C^*$ satisfying the following.
\begin{enumerate}[\rm ({V}$_1$)]
\item The function $\hat \mu_b=\hat \mu(b,\cdot):X_{j-1}\to\C^*$ is $J_b$-holomorphic for every $b\in B$.
\item $|\hat \mu_b(p)-1|<1/\tau$ for every $p\in S^{\rm odd}\setminus\bigcup_{a\in\Z_l^{\rm odd}}\xi_a$ and $b\in B$.
\item $\displaystyle \Re\int_{\xi_a}\Big(\frac1{\hat\mu_b\psi_b}-\hat\mu_b\psi_b\Big)\hat f_{b,1}\theta_b>\tau$ for every $a\in\Z_l^{\rm odd}$ and $b\in B$.
\end{enumerate}

Assuming that $\tau>0$ is chosen sufficiently large, the implicit function theorem provides, in view of (V$_2$) and the period domination property of the spray $\sigma$ in (III$_3$), a continuous map $B\to\B$ sending $b\in B$ to $\zeta_b\in \B$ and satisfying the following conditions.
\begin{enumerate}[\rm ({VI}$_1$)]
\item $\zeta_b\in\B'$ for every $b\in B$, where $\B'\subset \B\subset\C^N$ is any given ball centered at the origin.
\item Setting $h_b=\sigma(\zeta_b,b,\cdot)\hat\mu_b$,
we have that $\Pcal\big(b,h_b\big)=\Pcal(b,1)$ for every $b\in B$; see \eqref{eq:period-map}.
\end{enumerate}
Consider the continuous map $\hat\psi:B\times X_j\to\CP^1$ 
given by 
\[
	\hat\psi_b=\hat\psi(b,\cdot)=h_b\psi_b
	=\sigma(\zeta_b,b,\cdot)\hat\mu_b\psi_b,\quad b\in B,
\]
see \eqref{eq:psi_b}, and the continuous map $\hat\varphi:B\times X_j\to\boldA$ given by
\begin{equation}\label{eq:hatvarphi}
	\hat\varphi(b,\cdot)=\left( \hat f_{b,1}\,,\, 
	\frac12\Big(\frac1{\hat\psi_b}-\hat\psi_b\Big)\hat f_{b,1}
	\,,\,
	\frac{\imath}2\Big(\frac1{\hat\psi_b}+\hat\psi_b\Big)\hat f_{b,1}\right),\quad b\in  B;
\end{equation}
cf.\ \eqref{eq:varphi} and \eqref{eq:null3}. It turns out that $\hat\varphi$ is well defined (that is, it takes its values in $\boldA$), and the map 
$\hat\varphi_b=(\hat\varphi_{b,1},\hat\varphi_{b,1},\hat\varphi_{b,1})=\hat\varphi(b,\cdot):X_j\to\boldA$ is $J_b$-holomorphic for every $b\in B$; that is, the map $\hat\varphi$ is $\Jscr$-holomorphic. Moreover, condition (VI$_2$) ensures that the $J_b$-holomorphic $1$-form $(\hat\varphi_b-\hat f_b)\theta_b$ is exact on $X_j$ for every $b\in B$ (see condition (II$_5$)). Consider the continuous map $u':B\times X_j\to\R^3$ given by
\[
	u'_b(p)=u'(b,p)=\hat u_b(x_0)
	+\Re\int_{x_0}^p\hat\varphi_b\theta_b
	\quad \text{ for $p\in X_j$ and $b\in B$},
\]
where $x_0\in \mathring X_{j-1}$ was fixed above. The following conditions hold for every $b\in B$.
\begin{enumerate}[\rm ({i}$_1$)]  
\item The map $u'_b=(u'_{b,1},u'_{b,2},u'_{b,3}):X_j\to\R^3$ is a $J_b$-conformal minimal immersion.
\item $u'_{b,1}=\hat u_{b,1}$ everywhere on $X_j$.
\item $\Flux_{u'_b}=\Flux_{\hat u_b}=\Fcal_b|_{H_1(X_j,\Z)}$; see (II$_6$). 
\end{enumerate}
Furthermore, assuming that the ball $\B'$ in (VI$_1$) is sufficiently small and $\tau>0$ is sufficiently large, conditions (II$_3$) to (II$_5$), (II$_7$), (IV$_5$), (V$_2$), (V$_3$), and (VI$_1$) guarantee the following
for every $b\in B$.
\begin{enumerate}[\rm ({i}$_{11}$)]  
\item[\rm ({i}$_4$)]   $|u'_b(p)-\hat u_b(p)|<\epsilon_{j-1}/3$ for every $p\in X_{j-1}\cup\bigcup_{a\in\Z_l^{\rm even}}D_a$.
\item[\rm ({i}$_5$)]  $u'_{b,2}(p)>j$ for every $p\in \bigcup_{a\in\Z_l^{\rm odd}}\Omega_a$.
\item[\rm ({i}$_6$)] $\min\{u'_{b,1}(p),u'_{b,2}(p)\}>j-1$ for every $p\in \bigcup_{a\in\Z_l}\gamma_a$.
\item[\rm ({i}$_7$)] $\min\{u'_{b,1}(q_a),u'_{b,2}(q_a)\}>j$ for every $a\in\Z_l$.
\item[\rm ({i}$_8$)] $\hat \varphi_{b,1}(q_a)\hat \varphi_{b,2}(q_a)\neq 0$ for every  $a\in\Z_l$.
\item[\rm ({i}$_9$)] $\hat \varphi_{b,1}(p_a)\hat \varphi_{b,2}(p_a)\neq 0$ for every  $a\in\Z_l$.
\item[\rm ({i}$_{10}$)] $\hat \varphi_{b,1}(p)\neq 0$ for every  $p\in \overline{\beta_a\setminus\Omega_a}$ and $a\in\Z_l^{\rm odd}$.
\item[\rm ({i}$_{11}$)] $\hat\varphi_{b,2}(q_a^{a-1})\hat\varphi_{b,2}(q_a^a)\neq0$ for every $a\in\Z_l^{\rm odd}$; see \eqref{eq:betaa}.
\end{enumerate}
In particular, (i$_2$) and (i$_5$)  together with (IV$_2$) and (IV$_3$) guarantee the following.
\begin{enumerate}[\rm ({i}$_{11}$)] 
\item[\rm ({i}$_{12}$)] $\max\{u'_{b,1}(p),u'_{b,2}(p)\}>j-1$ for every $p\in \bigcup_{a\in\Z_l^{\rm odd}}D_a$ and $b\in B$. 
\item[\rm ({i}$_{13}$)] $\max\{u'_{b,1}(p),u'_{b,2}(p)\}>j$ for every $p\in \bigcup_{a\in\Z_l^{\rm odd}}\beta_a$ and $b\in B$.
\end{enumerate}
This shows that the immersions $u'_b$ take suitable values on $\bigcup_{a\in\Z_l^{\rm odd}}D_a$ (see \eqref{eq:max>j} and condition (a$_j$) in the induction), but need not do so on $\bigcup_{a\in\Z_l^{\rm even}}D_a$ (see \eqref{eq:cupDa}), so we have to keep working. 

%
%
\smallskip
\noindent{\em The third deformation.}  Repeating the arguments in the second deformation but for values $a\in\Z_l^{\rm even}$ and starting with $u'$ in the place of $\hat u$, and taking into account conditions (i$_1$) to (i$_{13}$), we can obtain a continuous map $u^j:B\times X_j\to\R^3$ with the following properties for every $b\in B$, where the sets $\Omega_a$ and $\beta_a$ and the points $q_a^{a-1}$ and $q_a^a$ for $a\in\Z_l^{\rm even}$ are defined analogously to those for $a\in\Z_l^{\rm odd}$.
\begin{enumerate}[\rm ({ii}$_1$)]  
\item The map $u_b^j=(u_{b,1}^j,u_{b,2}^j,u_{b,3}^j)=u^j(b,\cdot):X_j\to\R^3$ is a $J_b$-conformal minimal immersion.
\item $u_{b,2}^j=u'_{b,2}$ everywhere on $X_j$.
\item $\Flux_{u_b^j}=\Flux_{u'_b}=\Flux_{\hat u_b}=\Fcal_b|_{H_1(X_j,\Z)}$. 
\item $|u_b^j(p)-u'_b(p)|<\epsilon_{j-1}/3$ for every $p\in X_{j-1}\cup\bigcup_{a\in\Z_l^{\rm odd}}D_a$.
\item $u_{b,1}^j(p)>j$ for every $p\in (\bigcup_{a\in\Z_l^{\rm even}}\Omega_a)\cup(\bigcup_{a\in\Z_l^{\rm odd}}\overline{\beta_a\setminus\Omega_a})$. 
\item $u_{b,2}^j(p)>j$ for every $p\in (\bigcup_{a\in\Z_l^{\rm odd}}\Omega_a)\cup(\bigcup_{a\in\Z_l^{\rm even}}\overline{\beta_a\setminus\Omega_a})$. 

\item $u_{b,1}^j(p)>j-1$ for every $p\in \bigcup_{a\in\Z_l^{\rm odd}}\overline{D_a\setminus\Omega_a}$.
\item $u_{b,2}^j(p)>j-1$ for every $p\in \bigcup_{a\in\Z_l^{\rm even}}\overline{D_a\setminus\Omega_a}$.

\item $\partial_{J_b} u^j_{b,1}(p)\neq 0$ and $\partial_{J_b} u^j_{b,2}(p)\neq 0$ for every $p\in \bigcup_{a\in\Z_l}\{q_a,q_a^{a-1},q_a^a\}$.
\end{enumerate}

It is then clear that $u^j$ satisfies inequality \eqref{eq:max>j} (see (ii$_5$), (ii$_6$), and \eqref{eq:cupbetaa}) as well as conditions (a$_j$) (see (ii$_5$) to (ii$_8$) and \eqref{eq:cupbetaa}), (b$_j$) (see (II$_2$), (i$_4$), and (ii$_4$)), and (d$_j$) (see (ii$_3$)) in the induction. (See Figures \ref{fig:Da} and \ref{fig:division}.)
%
%
\begin{figure}[htbp]
    \centering
    \begin{tikzpicture}[>=Stealth, thick]

        \def\Rin{20}         
        \def\Rslot{21.2}     
        \def\Rout{23}        
        \def\RlabelIn{19.6}  
        \def\RlabelOut{23.4} 

        \draw (74:\Rin) arc (74:106:\Rin);
        \draw (74:\Rout) arc (74:106:\Rout);

        \foreach \a in {75, 85, 95, 105} {
            \draw (\a:\Rin) -- (\a:\Rout);
            \filldraw[fill=gray!70, draw=black] (\a:\Rin) circle (2pt);
            \filldraw[fill=gray!70, draw=black] (\a:\Rout) circle (2pt);
        }

        \draw (103:\Rout) -- (103:\Rslot) arc (103:97:\Rslot) -- (97:\Rout);
        \filldraw[fill=gray!70, draw=black] (103:\Rout) circle (2pt);
        \filldraw[fill=gray!70, draw=black] (97:\Rout) circle (2pt);

        \draw (93:\Rout) -- (93:\Rslot) arc (93:87:\Rslot) -- (87:\Rout);
        \filldraw[fill=gray!70, draw=black] (93:\Rout) circle (2pt);
        \filldraw[fill=gray!70, draw=black] (87:\Rout) circle (2pt);

        \draw (83:\Rout) -- (83:\Rslot) arc (83:77:\Rslot) -- (77:\Rout);
        \filldraw[fill=gray!70, draw=black] (83:\Rout) circle (2pt);
        \filldraw[fill=gray!70, draw=black] (77:\Rout) circle (2pt);

        \tikzset{circ/.style={circle, draw, thick, inner sep=0pt, minimum size=16pt}}

        \node[circ] at (100:22.3) {1};
        \node at (100:21.6) {$\Omega_{a-1}$};
        \node[circ] at (100:20.6) {2};
        \node at (100:\RlabelIn) {$\alpha_{a-1}$};

        \node[circ] at (90:22.3) {2};
        \node at (90:21.6) {$\Omega_a$};
        \node[circ] at (90:20.6) {1};
        \node at (90:\RlabelIn) {$\alpha_a$};

        \node[circ] at (80:22.3) {1};
        \node at (80:21.6) {$\Omega_{a+1}$};
        \node[circ] at (80:20.6) {2};
        \node at (80:\RlabelIn) {$\alpha_{a+1}$};

        \node at (100:\RlabelOut) {$\delta_{3a-2}$};
        \node at (90:\RlabelOut) {$\delta_{3a+1}$};
        \node at (80:\RlabelOut) {$\delta_{3a+4}$};

        
        \draw[<-] (96:23.05) -- (96.5:23.5) node[above, inner sep=1pt] {$\delta_{3a-1}$};
        
        \draw[<-] (94:23.05) -- (93.5:23.5) node[above, inner sep=1pt] {$\delta_{3a}$};
        
        \draw[<-] (86:23.05) -- (86.5:23.5) node[above, inner sep=1pt] {$\delta_{3a+2}$};
        
        \draw[<-] (84:23.05) -- (83.5:23.5) node[above, inner sep=1pt] {$\delta_{3a+3}$};

    \end{tikzpicture}
    \caption{The division $\Dscr^j$ of $bX_j$. Assuming that $a\in\Z_l^{\rm odd}$, the numbers $1$ and $2$ inside the circles point out the component function $u^j_{b,1}$ or $u^j_{b,2}$, respectively, 
that is suitably big in each set according to conditions (ii$_5$) to (ii$_8$).}
    \label{fig:division}
\end{figure}
%
%

By compactness of $B$, the Cauchy estimates allow us to choose a number $\epsilon_j>0$ so small that (c$_j$) holds. Finally, for each $a\in\Z_l=\{0,1,\ldots,l-1\}$ consider the arcs
\[
	\delta_{3a}=\beta_a^{a-1},\quad \delta_{3a+1}=\beta_a\cap\Omega_a,
	\quad\text{and}\quad
	\delta_{3a+2}=\beta_a^a,
\]
with the endpoints $q_{a-1}$ and $q_a^{a-1}$, $q_a^{a-1}$ and $q_a^a$, and $q_a^a$ and $q_a$, respectively; see \eqref{eq:betaa} and Figures \ref{fig:Da} and \ref{fig:division}.
It follows that $\Dscr^j=\{\delta_a: a\in\Z_{3l}\}$ is a division of $bX_j$ that, in view of properties (ii$_5$), (ii$_6$), 
and (ii$_9$), is compatible with $(u^j,j)$; see conditions ($\mathfrak D$1) to ($\mathfrak D$4) in Section \ref{ss:preparations} and recall that $l\ge 4$ is even. This closes the induction in the noncritical case.

%
%
\medskip
\noindent{\bf The critical case.} Assume that $\chi(X_j\setminus\mathring X_{j-1})=-1$, so the number of boundary components of $bX_{j-1}$ and $bX_j$ differ by one. For simplicity of exposition, we assume that $X_j\setminus\mathring X_{j-1}$ is connected, hence it is a pair of pants with boundary $bX_{j-1}\cup bX_j$. Otherwise, the other components of $X_j\setminus\mathring X_{j-1}$ are all annuli, and we argue on them as in the noncritical case. We distinguish cases.

\smallskip
\noindent{\em Case 1: $bX_{j-1}$ is connected.} So, $bX_j$ has two connected components. As in the noncritical case, write $\Dscr^{j-1}=\{\alpha_a: a\in\Z_l\}$, $l\ge 4$ even, for the given division of the closed Jordan curve $bX_{j-1}$ compatible with $(u^{j-1},j-1)$, denote by $p_a$ the only point in $\alpha_a\cap\alpha_{a+1}$, $a\in\Z_l$, and set $\Z_l^{\rm odd}$ and $\Z_l^{\rm even}$ as in \eqref{eq:Zlodd}. Thus, conditions (I$_1$) to (I$_3$) in the noncritical case are satisfied.

We may assume without loss of generality that $l\ge 8$ is a multiple of $4$. Otherwise, we choose a subarc $\alpha_0'$ of $\alpha_0$,
having $p_{l-1}$ as an endpoint and so small that 
for every $b\in B$ and $p\in \alpha_0'$ we have 
\[
	\min\{u^{j-1}_{b,1}(p),u^{j-1}_{b,2}(p)\}>j-1, 
	\quad
	\partial_{J_b} u^{j-1}_{b,1}(p)\neq 0, 
	\quad \partial_{J_b} u^{j-1}_{b,2}(p)\neq 0.
\]
Such a subarc exists by compactness of $B$ and conditions (I$_1$) 
to (I$_3$). Then, splitting $\alpha_0'$ into any family of $l$ adjacent subarcs, say $\beta_1,\ldots,\beta_l$, with $p_{l-1}\in\beta_1$, we obtain that 
$\{\beta_1,\ldots,\beta_l, \overline{\alpha_0\setminus\alpha_0'},\alpha_1,\ldots,\alpha_{l-1}\}$ is a division of $bX_{j-1}$ that is compatible with $(u^{j-1},j-1)$ and consists of $2l\ge 8$ arcs, which is a multiple of $4$ since $l$ is even.

Consider the $\Jscr$-holomorphic map $f:B\times X_{j-1}\to\bold A$ given by $f(b,\cdot)=2\partial_{J_b} u^{j-1}_b/\theta_b$ for every $b\in B$, and set $f_b=f(b,\cdot)=(f_{b,1},f_{b,2},f_{b,3}):X_{j-1}\to\boldA$.
Choose a pair of points 
\[
	q\in \alpha_0\setminus\{p_{l-1},p_0\}
	\quad\text{and}\quad 
	q'\in \alpha_{l/2}\setminus\{p_{l/2-1},p_{l/2}\}.
\]
Also, choose a smooth embedded arc $\gamma\subset \mathring X_j\setminus\mathring X_{j-1}$ with the initial point $q$, the final point $q'$, and otherwise disjoint from $bX_{j-1}$. We take $\gamma$ such that the compact set
\[
	S=X_{j-1}\cup\gamma\subset\mathring X_j
\]
is admissible (Definition \ref{def:admissible}) and a strong deformation retract of $X_j$. (See Figure \ref{fig:critical-1}.)

%
%
\begin{figure}[htbp]
    \centering
    \begin{subfigure}[c]{0.48\textwidth}
        \centering
        \vspace{-0.2cm} 
        \begin{tikzpicture}[line join=round, line cap=round, scale=0.75]
            \coordinate (centro) at (0, 2.1);
            
            \draw[dashed] (0,0) ellipse (1.5 and 0.4);
            \draw (1.5,0) arc (0:-180:1.5 and 0.4);
            
            \draw (-1.5,0) -- (-1.5,-0.7);
            \draw (1.5,0) -- (1.5,-0.7);
            
            \draw (-1.5,0) .. controls (-1.5,1) and (-3.5,1.5) .. (-3.5,3);
            \draw (1.5,0) .. controls (1.5,1) and (3.5,1.5) .. (3.5,3);
            
            \draw (-0.7,3) .. controls (-0.4,2.1) .. (centro) .. controls (0.4,2.1) .. (0.7,3);
            
            \draw (-2.1,3) ellipse (1.4 and 0.4);
            \draw (2.1,3) ellipse (1.4 and 0.4);
            
            \filldraw (0,-0.4) circle (1.5pt);
            \draw[thick] (0,-0.4) .. controls (0,0.8) and (0,1.5) .. (centro);
            \draw[dashed] (centro) .. controls (0.4,1.5) and (0.6,0.8) .. (0.6,0.36);
            \filldraw (0.6,0.36) circle (1.5pt);
            
            \node at (0,-1.2) {$X_{j-1}$};
            \node at (-2.4,2.2) {$X_{j}$};
            \node at (-0.3,1.2) {$\gamma$};
        \end{tikzpicture}
    \end{subfigure}
    \hfill 
    \begin{subfigure}[c]{0.48\textwidth}
        \centering
        \begin{tikzpicture}[line cap=round, line join=round, scale=0.85]
            \def\R{3}
            \def\r{2.5}
            \draw[thick] (0,0) ellipse ({\R} and {\r});
            
            \node[circle, fill, inner sep=1.5pt, label={above:$\alpha_0$}] (a0) at (0,\r) {};
            \node[circle, fill, inner sep=1.5pt, label={below:$\alpha_{l/2}$}] (al2) at (0,-\r) {};
            
            \node at ({140}:{\R} and {\r}) [label={[label distance=-2pt]above left:$\alpha_1$}] {};
            \node at ({220}:{\R} and {\r}) [label={[label distance=-2pt]below left:$\alpha_{l/2-1}$}] {};
            \node at ({-40}:{\R} and {\r}) [label={[label distance=-2pt]below right:$\alpha_{l/2+1}$}] {};
            \node at ({40}:{\R} and {\r})  [label={[label distance=-2pt]above right:$\alpha_{l-1}$}] {};

            \foreach \ang in {25, 65, 115, 155, 205, 245, 295, 335} {
                \node[circle, fill, inner sep=1pt] at ({\ang}:{\R} and {\r}) {};
            }

            \draw[thick] (a0) -- (al2) node[midway, right] {$\gamma$};
            
            \coordinate (a01_top) at (-0.6, 2.45);
            \coordinate (a01_bot) at (-0.6, -2.45);
            \coordinate (a02_top) at (0.6, 2.45);
            \coordinate (a02_bot) at (0.6, -2.45);

            \draw[blue, thick] (a01_top) -- (a01_bot) node[midway, left, black] {$\alpha_0^1$};
            \draw[blue, thick] (a02_top) -- (a02_bot) node[midway, right, black] {$\alpha_0^2$};

            \foreach \p in {a01_top, a01_bot, a02_top, a02_bot} {
                \node[circle, fill=blue, inner sep=1.5pt] at (\p) {};
            }
        \end{tikzpicture}
    \end{subfigure}

    \caption{Critical case 1}
    \label{fig:critical-1}
\end{figure}
%
%
%
%

The following claim is an immediate consequence of Lemma \ref{lem:CI1};
for the second condition, take into account (I$_2$) and that $l/2$ is even.

\begin{claim}\label{cl:critical}
The pair of maps $u^{j-1}:B\times X_{j-1}\to\R^3$ and $f:B\times X_{j-1}\to\boldA$ can be extended to continuous maps $u^{j-1}:B\times S\to\R^3$ and $f:B\times S\to\boldA$ such that, setting $u^{j-1}_b=u^{j-1}(b,\cdot)=(u^{j-1}_{b,1},u^{j-1}_{b,2},u^{j-1}_{b,3})$ and $f_b=f(b,\cdot)=(f_{b,1},f_{b,2},f_{b,3})$ for $b\in B$, 
the following conditions are satisfied for every $b\in B$.
\begin{itemize}
\item The pair $(u^{j-1}_b,f_b\theta_b)$ is a generalized $J_b$-conformal minimal immersion $S\to\R^3$ of class $\Cscr^1$  
(see Definition \ref{def:admissible}).
\item $u^{j-1}_{b,2}(p)>j-1$ for every $p\in\gamma$.
\end{itemize}
\end{claim}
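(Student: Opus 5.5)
The plan is to extend $u^{j-1}$ and $f$ along the single arc $\gamma$ by a parametric convex integration, working in a local parameter $t\in[0,1]$ of $\gamma$ with $\gamma(0)=q$ and $\gamma(1)=q'$. Set $\theta_b(\dot\gamma(t))=\lambda_b(t)\in\C^*$; this is a continuous, nonvanishing family. Extending $(u^{j-1}_b,f_b\theta_b)$ to a generalized conformal minimal immersion on $S$ amounts to choosing two things:
\begin{itemize}
\item a continuous family of $\Cscr^1$ curves $t\mapsto F(b,t)\in\C^3$ whose derivatives $\dot F(b,t)/\lambda_b(t)=:f_b(\gamma(t))$ lie in $\boldA$;
\item and then setting $u^{j-1}_b(\gamma(t))=\Re F(b,t)$.
\end{itemize}
At the endpoints we must have $f_b(\gamma(t))=f_b(q),f_b(q')$, the given values. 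We also need $\Re F(b,0)=u^{j-1}_b(q)$ and $\Re F(b,1)=u^{j-1}_b(q')$. Only the real part has to match, so we may take the imaginary part at $t=1$ freely, for instance $F(b,1)=u^{j-1}_b(q')+\imath c_b$, with $c_b$ chosen continuously.

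First I build a reference family $F_0$. Since $\lambda_b\ne0$ and $\boldA$ is a closed cone, the rescaled set $\lambda_b(t)\boldA$ is again $\boldA$. So the endpoint conditions read $\dot F_0(b,0)=\lambda_b(0)f_b(q)$ and $\dot F_0(b,1)=\lambda_b(1)f_b(q')$, both lying in $\boldA$. I choose $F_0$ as follows:
\begin{itemize}
\item near $t=0$ and $t=1$, $F_0$ is the affine curve with these prescribed derivatives;
\item in between, $F_0$ has real part running along a path from $u^{j-1}_b(q)$ to $u^{j-1}_b(q')$ whose second coordinate stays $>j-1+\eta$ for some $\eta>0$;
\item the transition pieces are short enough that this bound persists on all of $\gamma$.
\end{itemize}
This is where the hypotheses enter. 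We have $q\in\alpha_0$ and $q'\in\alpha_{l/2}$, and both $0$ and $l/2$ are even (recall that $l$ is a multiple of $4$). So (I$_2$) gives $u^{j-1}_{b,2}(q)>j-1$ and $u^{j-1}_{b,2}(q')>j-1$. By compactness of $B$ there is a uniform margin $\eta>0$. A straight segment between the two endpoint values in $\R^3$ (with an affine imaginary part) then keeps the second coordinate above $j-1+\eta$ by convexity of a half-space. The data depend continuously on $b$, so $F_0$ is continuous in $(b,t)$ and $\Cscr^1$ in $t$. Let $f_0(b,t)$ be any continuous $\boldA$-valued map agreeing with $\dot F_0$ at $t=0,1$.

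Next I apply Lemma~\ref{lem:CI1} to $f_0$ and $F_0$ with $\epsilon<\eta$. This yields $F$ satisfying the following:
\begin{itemize}
\item $F$ equals $F_0$, together with its $t$-derivative, at $t=0,1$;
\item $\|F-F_0\|_\infty<\eta$;
\item $\dot F(b,t)\in\boldA$ for all $t$.
\end{itemize}
Defining $f_b(\gamma(t))=\dot F(b,t)/\lambda_b(t)\in\boldA$ and $u^{j-1}_b(\gamma(t))=\Re F(b,t)$ gives continuous extensions to $B\times S$ that agree with the old data at $q$ and $q'$. The identity $\Re(\gamma^*(f_b\theta_b))=d(u^{j-1}_b\circ\gamma)$ holds by construction. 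We also have $u^{j-1}_{b,2}>j-1+\eta-\eta=j-1$ on $\gamma$.

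I expect the main obstacle to be the regularity required by Definition~\ref{def:admissible}. The extended $f_b$ must be $\Cscr^0$ on $S$ and $u^{j-1}_b$ must be $\Cscr^1$; this follows from the endpoint matching of $F$ and $\dot F$ in Lemma~\ref{lem:CI1}(a). Continuity of $\lambda_b$ and of the endpoint data in $b$ is what makes the whole family continuous. Since $F(b,1)$ is prescribed exactly, we rely on the exact endpoint version in Lemma~\ref{lem:CI1}, which is precisely what that lemma provides.
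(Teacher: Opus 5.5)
Your argument is correct and follows the paper's route: you apply Lemma~\ref{lem:CI1} along $\gamma$ after rescaling by $\lambda_b=\theta_b(\dot\gamma)$, using that $\boldA$ is a $\C^*$-cone, and the bound on $\gamma$ comes from (I$_2$) at $q\in\alpha_0$ and $q'\in\alpha_{l/2}$, both indices being even. Two points deserve a line each: the auxiliary map $f_0$ exists because $(b,s)\mapsto\lambda_b(s)\,f_b(\sigma(s))$, for a path $\sigma$ in the connected set $X_{j-1}$ from $q$ to $q'$, joins the two prescribed endpoint maps $B\to\boldA$ (this is not automatic for a general parameter space $B$); and your free choice of $c_b$ should be used to give the new cycle through $\gamma$ flux $\Fcal_b$, since (A$_4$) later requires this.
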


By the Mergelyan theorem for families of conformal minimal immersions in \cite[Theorem 8.2 and Corollary 8.6]{Forstneric2024Runge}, we can approximate $u^{j-1}$ uniformly on $B\times S$ by a continuous map $\hat u:B\times X_j\to\R^3$ satisfying the following conditions 
for every $b\in B$.
\begin{enumerate}[\rm ({A}$_1$)]
\item The map $\hat u_b=\hat u(b,\cdot)=(\hat u_{b,1},\hat u_{b,2},\hat u_{b,3}):X_j\to\R^3$ is a $J_b$-conformal minimal immersion.
\item $\hat u_{b,1}(p)>j-1$ for every $p\in \bigcup_{a\in\Z_l^{\rm odd}}\alpha_a$.
\item $\hat u_{b,2}(p)>j-1$ for every 
$p\in \gamma\cup\bigcup_{a\in\Z_l^{\rm even}} \alpha_a$.
\item $\Flux_{\hat u_b}(C)=\Fcal_b(C)$ for every closed curve $C\subset X_j$; take into account (d$_{j-1}$).
\item $\hat f_{b,1}(p_a)\hat f_{b,2}(p_a)\neq 0$ for every  $a \in\Z_l$, where 
\[
	\hat f_b=(\hat f_{b,1},\hat f_{b,2},\hat f_{b,3})
	=2\partial_{J_b}\hat u_b/\theta_b:X_j\to\boldA.
\]
\end{enumerate}

By (A$_3$) and compactness of $B$, we may choose a closed disc neighbourhood $D$ of $\gamma$ in $\mathring X_j$ such that $D\setminus\mathring X_{j-1}$ is a closed disc that intersects $bX_{j-1}$ in a pair of small closed arcs around the points $q\in\alpha_0$ and $q'\in\alpha_{l/2}$, $D\cap\alpha_a=\varnothing$ for every $a\in\Z_l\setminus\{0,l/2\}$, and
\begin{equation}\label{eq:>j-1D}
	\hat u_{b,2}(p)>j-1\quad\text{for every $p\in D$ and $b\in B$}.
\end{equation}
Moreover, we choose $D$ such that $X_{j-1}\cup D$ is a smoothly bounded compact domain that is a strong deformation retract of $X_j$. 
Set $l'=l/2$, and note by the assumption on $l$ that $l'\ge 4$ and is even. It turns out that $X_{j-1}\cup D$ has two boundary components; one of them, say $C_1$, containing the arcs $\alpha_a$ for $a=1,\ldots,l'-1$, and the other one, say $C_2$, containing the arcs $\alpha_a$ for  $a=l'+1,\ldots,l-1$. Moreover,
\[
	\alpha_0^1=\overline{C_1\setminus(\alpha_1\cup\cdots\cup\alpha_{l'-1})}
	\quad\text{and}\quad
	\alpha_0^2=\overline{C_2\setminus(\alpha_{l'+1}\cup\cdots\cup\alpha_{l-1})}
\]
are connected arcs with endpoints $p_{l'-1}$ and $p_0$, and $p_{l-1}$ and $p_{l'}$, respectively. (See Figure \ref{fig:critical-1}.) Setting $\alpha_a^1=\alpha_a$ and $\alpha_a^2=\alpha_{l'+a}$ for $a=1,\ldots,l'-1$, we have that $\Dcal_i=\{\alpha_a^i: a\in \Z_{l'}\}$ is a division of $C_i$, $i=1,2$, hence $\Dcal=\Dcal_1\cup\Dcal_2$ is a division of $b(X_{j-1}\cup D)=C_1\cup C_2$. 
Finally, (A$_2$), (A$_3$), (A$_5$), and \eqref{eq:>j-1D} guarantee that $\Dcal$ is compatible with $(\hat u,j-1)$; see conditions ($\mathfrak D$1) to ($\mathfrak D$4) in Section \ref{ss:preparations}. This and \eqref{eq:>j-1D} reduce the proof to the noncritical case.

\smallskip
\noindent{\em Case 2: $bX_{j-1}$ has two connected components.} So, $bX_j$ is connected. Write $C_1$ and $C_2$ for the components of $bX_{j-1}$ and $\Dscr^{j-1}=\Dcal_1\cup\Dcal_2$ for the given division of $bX_{j-1}$ compatible with $(u^{j-1},j-1)$, where $\Dcal_i=\{\alpha^i_a: a\in \Z_{l_i}\}$, $l_i\ge 4$ even, is a division of $C_i$ for $i=1,2$. Arguing as in the beginning of Case 1, we can assume that $l_1=l_2=l$. Denote by $p_a^i$ the only point in $\alpha_a^i\cap\alpha_{a+1}^i$, $i=1,2$, $a\in\Z_l$, and set $\Z_l^{\rm odd}$ and $\Z_l^{\rm even}$ as in \eqref{eq:Zlodd}. Conditions ({$\mathfrak D$}2) to ({$\mathfrak D$}4) then give the following properties for every $b\in B$.
\begin{enumerate}[({B}$_1$)]
\item $u^{j-1}_{b,1}(p)>j-1$ for every $p\in\alpha_a^i$, $i=1,2$, $a\in\Z_l^{\rm odd}$.
\item $u^{j-1}_{b,2}(p)>j-1$ for every $p\in\alpha_a^i$, $i=1,2$, $a\in\Z_l^{\rm even}$.
\item $\partial_{J_b} u^{j-1}_{b,1}(p_a^i)\neq 0$ and $\partial_{J_b} u^{j-1}_{b,2}(p_a^i)\neq 0$ for every $i=1,2$ and $a\in\Z_l$.
\end{enumerate}

Arguing as in Case 1, we can find a continuous map $\hat u:B\times X_j\to\R^3$, close to $u^{j-1}$ uniformly on $B\times X_{j-1}$, and a closed disc $D\subset\mathring X_j$ satisfying the following conditions (see Figure \ref{fig:critical-2}).
\begin{enumerate}[({C}$_1$)]
\item The map $\hat u_b=\hat u(b,\cdot)=(\hat u_{b,1},\hat u_{b,2},\hat u_{b,3}):X_j\to\R^3$ is a $J_b$-conformal minimal immersion for every $b\in B$.
\item $\hat u_{b,1}(p)>j-1$ for every $p\in \bigcup_{a\in\Z_l^{\rm odd}}\alpha_a^i$, $i=1,2$, and
$b\in B$.
\item $\hat u_{b,2}(p)>j-1$ for every $p\in D\cup\bigcup_{a\in\Z_l^{\rm even}} \alpha_a^i$, $i=1,2$, and  $b\in B$.
\item $\Flux_{\hat u_b}(C)=\Fcal_b(C)$ for every closed curve $C\subset X_j$ and $b\in B$; take into account (d$_{j-1}$).
\item $\hat f_{b,1}(p_a^i)\hat f_{b,2}(p_a^i)\neq 0$ for every  $a \in\Z_l$, $i=1,2$, and $b\in B$, where 
\[
	\hat f_b=(\hat f_{b,1},\hat f_{b,2},\hat f_{b,3})=2\partial_{J_b}\hat u_b/\theta_b:X_j\to\boldA.
\]
\item $D\setminus\mathring X_{j-1}$ is a closed disc that intersects $bX_{j-1}$ in a pair of small closed arcs, one in the relative interior of $\alpha_0^1$ and the other in the one of $\alpha_0^2$.
\item $X_{j-1}\cup D$ is a smoothly bounded compact domain that is a strong deformation retract of $X_j$.
\end{enumerate}

%
%
\begin{figure}[htbp]
    \centering
    \begin{subfigure}[b]{0.40\textwidth}
        \centering
        \begin{tikzpicture}[line join=round, line cap=round, scale=0.65]
          \draw[dashed] (-2,0) arc (180:0:1 and 0.25);
          \draw[thick] (-2,0) arc (180:360:1 and 0.25);
          \draw[dashed] (1,0) arc (180:0:1 and 0.25);
          \draw[thick] (1,0) arc (180:360:1 and 0.25);
          
          \draw[thick] (-2,0) -- (-2,-0.8);
          \draw[thick] (0,0) -- (0,-0.8);
          \draw[thick] (1,0) -- (1,-0.8);
          \draw[thick] (3,0) -- (3,-0.8);

          \draw[thick] (0.5,4) ellipse (0.7 and 0.2);
          \draw[thick] (-2,0) to[out=90, in=270] (-0.2,4); 
          \draw[thick] (3,0) to[out=90, in=270] (1.2,4);
          \draw[thick] (0,0) to[out=90, in=180] (0.5,1.7) to[out=0, in=90] (1,0);

          \fill (-1, -0.25) circle (2pt) coordinate (P1);
          \fill (2, -0.25) circle (2pt) coordinate (P2);
          \draw[thick] (P1) .. controls (-0.8, 2.6) and (1.8, 2.6) .. (P2);
          \node at (0.9, 2.2) {$\gamma$};

          \node at (0.5, 3.1) {$X_j$};
          \node at (-1, -1.2) {$X_{j-1}$};
          \node at (2, -1.2) {$X_{j-1}$};
        \end{tikzpicture}
    \end{subfigure}
    \hfill
    \begin{subfigure}[b]{0.55\textwidth}
        \centering
        \begin{tikzpicture}[
            dot/.style={circle, fill=black, inner sep=1.2pt},
            blue dot/.style={circle, fill=blue!80!black, inner sep=1.5pt},
            line width=0.6pt,
            scale=0.85 
        ]
            \draw (0,0) circle (2cm);
            \node[blue dot] (L_top) at (15:2cm) {};
            \node[dot]      (L_mid) at (0:2cm) {};
            \node[blue dot] (L_bot) at (-15:2cm) {};
            
            \node[dot] at (70:2cm) {}; 
            \node at (53:2.02cm) [anchor=south west] {\small $\alpha_1^1$};
            
            \node[dot] at (35:2cm) {};
            \node[dot] at (-35:2cm) {};
            
            \node[dot] at (-70:2cm) {}; 
            \node at (-53:2.02cm) [anchor=north west] {\small $\alpha_{l-1}^1$};
            
            \node[left] at (L_mid) {\small $\alpha_0^1$};

            \begin{scope}[shift={(5.2,0)}] 
                \draw (0,0) circle (2cm);
                \node[blue dot] (R_top) at (165:2cm) {};
                \node[dot]      (R_mid) at (180:2cm) {};
                \node[blue dot] (R_bot) at (-165:2cm) {};
                
                \node[dot] at (110:2cm) {}; 
                \node at (127:2.02cm) [anchor=south east] {\small $\alpha_{l-1}^2$};
                
                \node[dot] at (145:2cm) {};
                \node[dot] at (-145:2cm) {};
                
                \node[dot] at (-110:2cm) {}; 
                \node at (-127:2.02cm) [anchor=north east] {\small $\alpha_1^2$};
                
                \node[right] at (R_mid) {\small $\alpha_0^2$};
            \end{scope}

            \draw[blue!70!black, thick] (L_top) -- node[above, black] {\small $\beta'$} (R_top);
            \draw (L_mid) -- node[above] {\small $\gamma$} (R_mid);
            \draw[blue!70!black, thick] (L_bot) -- node[below, black] {\small $\beta$} (R_bot);
        \end{tikzpicture}
    \end{subfigure}

    \vspace{10pt}
    \caption{Critical case 2}
    \label{fig:critical-2}
\end{figure}

It follows that $\Gamma=b(X_{j-1}\cup D)$ is connected, it contains the arc $\alpha_a^i$ for every $a\in\Z_l\setminus\{0\}$ and $i=1,2$, and the set
\[
	\overline{\Gamma\setminus\cup_{i=1,2}(\alpha_1^i\cup\cdots\cup\alpha_{l-1}^i)}
\]
is the disjoint union of two arcs, say, $\beta$ and $\beta'$, with endpoints $p_{l-1}^1$ and $p_0^2$, and $p_{l-1}^2$ and $p_0^1$, respectively. Setting $\alpha_0=\beta$, $\alpha_a=\alpha_a^2$ for $a\in\Z_l\setminus\{0\}$, $\alpha_l=\beta'$, and $\alpha_{l+a}=\alpha_a^1$ for $a\in\Z_l\setminus\{0\}$, it turns out in view of conditions (C$_2$), (C$_3$), and (C$_5$) that $\Dcal=\{\alpha_a: a\in \Z_{2l}\}$ is a division of $b(X_{j-1}\cup D)$ that is compatible with $(\hat u,j-1)$. This and (C$_3$) reduce the proof to the noncritical case.

This closes the induction and completes the proof of the theorem.

%
%
\begin{remark}\label{rem:hp}
(a)
Our construction shows that the family of proper $J_b$-conformal
minimal immersions $u_b:X\to\R^3$ in Theorem \ref{th:CMI} can be 
chosen such that the family of their Weierstrass data 
$f_b=2\di_{J_b}u_b/\theta_b : X\to \boldA$, $b\in B$, is homotopic
to any given continuous map $B\times X\to \boldA$.
The analogous remark holds for families of proper 
null curves $X\to \C^n$ in Corollary \ref{cor:null}. 

(b) Our construction also gives a map $u$ satisfying Theorem \ref{th:CMI} such that $\max\{u_{b,1},u_{b,2}\}:X\to\R$ is a proper map for every $b\in B$. The same holds true in the context of Corollaries \ref{cor:null} and \ref{cor:C2}.
\end{remark}

%
%
%
%
\medskip 

\subsection*{Acknowledgements}
Alarc\'on is partially supported by the State Research Agency (AEI) via the grant no.\ PID2023-150727NB-I00, and the ``Maria de Maeztu'' Unit of Excellence IMAG, reference CEX2020-001105-M, funded by MICIU/AEI/10.13039/501100011033 and ERDF/EU, Spain.
Forstneri\v c is supported by the European Union
(ERC Advanced grant HPDR, 101053085) 
and grant P1-0291 from ARIS, Republic of Slovenia. 




\end{document}